\theoremstyle{plain}
\newtheorem{thm}{Theorem}
\newtheorem{lem}{Lemma}
\newtheorem{prop}{Proposition} 
\newtheorem{cor}{Corollary}
\providecommand{\sm}{\setminus}
\providecommand{\N}{\mathbb{N}}
\providecommand{\R}{\mathbb{R}}
\providecommand{\Z}{\mathbb{Z}}
\providecommand{\T}{\mathbb{T}}
\providecommand{\I}{\mathcal{I}}
\providecommand{\J}{\mathcal{J}}
\providecommand{\cF}{\mathcal{F}}
\renewcommand{\L}{\mathcal{L}}
\providecommand{\eps}{\varepsilon}
\providecommand{\dx}{\:\mathrm{d}x}
\providecommand{\dxi}{\:\mathrm{d}\xi}
\providecommand{\dt}{\:\mathrm{d}t}
\providecommand{\dtx}{\:\mathrm{d}(t, x)}
\newcommand{\norm}[1]{\left\lVert #1 \right\rVert}
\renewcommand{\i}{\mathrm{i}}
\newcommand{\e}[1]{\mathrm{e}^{#1}}
\newcommand{\pushright}[1]{\ifmeasuring@#1\else\omit\hfill$\displaystyle#1$\fi\ignorespaces}
\DeclareMathOperator{\dist}{dist}
\DeclareMathOperator{\supp}{supp}
\DeclareMathOperator{\spa}{span}
\DeclareMathOperator{\Real}{Re}
\newcounter{stepcount}
\newcounter{substepcount}[stepcount]
\newenvironment{steps}[0]{
\setcounter{stepcount}{0}
\setcounter{substepcount}{0}
\newcommand{\step}[1]{\stepcounter{stepcount}
\vspace*{0.2cm} \underline{\textsc{Step} \arabic{stepcount}}: \textit{##1} 

\vspace*{0.1cm}
}

}{ }
\begin{document}

\allowdisplaybreaks

\title{Variational methods for breather solutions of Nonlinear Wave Equations}

\author{Rainer Mandel}
\address{R. Mandel\hfill\break
Karlsruhe Institute of Technology \hfill\break
Institute for Analysis \hfill\break
Englerstra{\ss}e 2 \hfill\break
D-76131 Karlsruhe, Germany}
\email{Rainer.Mandel@kit.edu}
\date{\today}
%

\author{Dominic Scheider}
\address{ D. Scheider\hfill\break
Karlsruhe Institute of Technology \hfill\break
Institute for Analysis \hfill\break
Englerstra{\ss}e 2 \hfill\break
D-76131 Karlsruhe, Germany}
\email{dominic.scheider@kit.edu}
\date{\today}
%
\keywords{Wave equation, Breather, Dual variational methods, Helmholtz equation}

\begin{abstract}
We construct infinitely many real-valued, time-periodic breather solutions of the nonlinear wave equation
$$
	\partial_{tt} U - \Delta U = Q(x) |U|^{p-2} U
	\quad \text{on } \T \times \R^N
$$
 with suitable $N \geq 2$, $p > 2$ and localized nonnegative $Q$. These solutions are obtained from
 critical points of a dual functional and they are weakly localized in space.
 Our abstract framework allows to find similar existence results for the nonlinear Klein-Gordon equation
 and biharmonic wave equations.
\end{abstract}

\maketitle
\allowdisplaybreaks

\section{Introduction}

Breathers are real-valued, time-periodic and spatially localized solutions of nonlinear equations describing
the propagation of waves on $\R^N\times\R$ where $N\in\N$. The  existence of breather solutions appears to be
a rare phenomenon and up to now, most work in this area is related to the discussion of explicit examples such
as the famous sine-Gordon breather for the $(1+1)$-sine-Gordon equation~\cite{Abl_Method}. A
number of (in-)stability results for such
explicit breathers~\cite{AleMunPal_VariationalStructure,Ale_NLStab,AleMun_NLStability,AleFanMun_Akhmediev} is
available. Nonexistence results can be found in
\cite{KowMarMun_Nonexistence,Denzler_Nonpersistence,MunPon_Breathers}.
The construction of non-explicit breather solutions is a very difficult task. In papers by
Hirsch, Reichel~\cite[Theorem~1.3]{Hirsch} and Blank, Chirilus-Bruckner, Lescarret,
Schneider~\cite{Schneider} this was achieved for nonlinear wave equations of the form
\begin{equation}\label{eq:1+1}
  s(x)\partial_{tt}u - u_{xx} + q(x)u = f(x,u)
  \qquad (x,t\in\R)
\end{equation}
following two completely different approaches. The methods from~\cite{Schneider} come from spatial dynamics
and rely on center manifold reductions. For one very specific choice of periodic step functions $s,q$
(multiples of each other) and the nonlinearity  $f(x,u)= u^3$, the authors prove the existence of $L^\infty$-small periodic
breather solutions that are exponentially localized in space. The particular choice for $s$ and $q$ is motivated by the underlying
spectral theory of periodic Hill operators, also called Floquet theory. Using variational methods instead,
Hirsch and Reichel~\cite{Hirsch} proved the existence of (spatially) square integrable breather solutions
under appropriate assumptions on the nonlinearity. The latter include power-type nonlinearities
$f(x,u)=|u|^{p-1}u$ with $1<p<p^*$ for some $p^*$ depending on the choice of $s$ and $q$. Again, the
potentials $s,q$ are of very special form in order to ensure suitable spectral properties. More precisely, it
is required that for all $k\in\Z$ the  spectrum of the linear operator associated
with the $k$-th mode does not contain 0 in a uniform sense.
This makes it possible to have a strong localization in space. All of these results concern the case of one
spatial dimension $N=1$. The Bethe-Sommerfeld Conjecture about the number of gaps of periodic Schr\"odinger
operators (see \cite[Section 6.1.3]{Kuch_Overview}) suggests that the above approach can hardly be
generalized to higher space dimensions that we discuss next.

\medskip

In the case $N\geq 2$ we are aware of very few results. The first deals with
a semilinear curl-curl wave equation in $\R^3\times\R$ where $-u_{xx}$ is replaced by
$\nabla\times\nabla\times u$ in~\eqref{eq:1+1} and $u$ is a three-dimensional vector field on $\R^3$. Using
that this part in the equation actually vanishes for gradient fields,  Plum and
Reichel~\cite{PlumReichel_Breather} succeeded in proving the existence of exponentially localized breather
solutions via ODE methods for suitable radially symmetric coefficient functions $s,q$ and power-type nonlinearities $f$. As far as we know, this is the only result
dealing with strongly localized breathers in higher dimensions, i.e., $U(t,\cdot)\in L^2(\R^N)$ for almost
all $t\in\R$.
Recently, the second author suggested a new construction of (even in time)
breathers~\cite{scheider2020breather} for the cubic Klein-Gordon equation that we will refer to as weakly localized in space. Those satisfy
$U(\cdot,t)\in L^q(\R^N)$ for almost all $t\in\R$ for some $q>2$ and we believe that in general
$U(t,\cdot)\notin L^2(\R^N)$ holds due to rather small decay rates at infinity, presumably $U(t,x)\sim |x|^{(1-N)/2}$ as
$|x|\to\infty$. That approach relies on the $L^p$-theory for Helmholtz
equations on $\R^N$ and bifurcation techniques allow to prove the existence of infinitely many branches consisting of
polychromatic radially symmetric breather solutions that emanate from a nontrivial stationary solution of the
problem. The solutions are of the form

\begin{equation}\label{eq:FourierAnsatzU}
  U(t,x)= \sum_{k\in \I_s} \e{\i kt}u_k(x),\qquad \I_s\subseteq \Z
\end{equation}
with radially symmetric Fourier modes $u_k=\overline{u_{-k}}$ infinitely many of which are
non-zero. Imposing radial symmetry is of course a significant restriction.

\medskip

In this paper we propose a  dual variational approach for the construction of weakly localized breather
solutions of abstract nonlinear wave equations without any symmetry assumptions on the coefficients.
This approach goes back to Br\'{e}zis, Coron, Nirenberg~\cite{Brezis_vibes} and Rabinowitz~\cite{Rab_vibes} in
the context of classical nonlinear wave equations in bounded domains. Dual variational methods have the advantage
that they overcome the strong indefiniteness of the corresponding functional in the classical variational formulation.
They apply in the case $N\geq 2$ and do not rely on specific ``nonstandard'' choices for the elliptic part of
the wave operator that seem to be needed for the construction of strongly localized breathers via classical
variational methods when $N=1$, see~\cite{Hirsch}. The dual variational method is typically implemented
in function spaces different from $L^2$ or $H^1$ so that the constructed solutions do not necessarily give rise to
strongly localized breathers. In fact, we shall prove the existence of weakly localized breathers only. It
is entirely unclear whether strongly localized breathers of nonlinear wave equations exist in the case $N\geq
2$.

 \medskip

We prove the existence of weakly localized breathers for nonlinear wave equations of the
form
\begin{equation}\label{eq_general}
	 \partial_{tt}  U  + \L  U = Q(x)|U|^{p-2}U
	\quad \text{on }   \R\times\R^N .
\end{equation}
 Since we want~\eqref{eq_general} to hold in a distributional sense that we will  make precise
below, we only need to assume that there is some $m\in\N$ and a suitable $q\in [p,\infty]$ such that
$\L:W^{m,\infty}(K)\to L^{q'}(\R^N)$ is a bounded linear map for all compact $K\subset\R^N$, see assumption
(A1) below. Here and in the following, $q'$ denotes the H\"older conjugate exponent of $q$ defined by
$\frac{1}{q}+\frac{1}{q'}=1$.
As a model case one may have in mind $\L=-\Delta$. In contrast to~\cite{scheider2020breather} we can deal
with nonradial $Q$ and obtain the existence of an unbounded sequence of breathers.
In order to avoid ``bad'' modes, we look for breather solutions that enjoy additional symmetry properties with
respect to  time. To include such symmetries in our analysis we introduce a parameter $s\in\{1,\ldots,5\}$
that stands for
\begin{itemize}
\item[] ($s=1$)\quad no additional symmetry,
\item[] ($s=2$)\quad $U(t,x) = U(-t,x)$, i.e., $U$ is even in time, 
\item[] ($s=3$)\quad $U(t,x) = - U(-t,x)$, i.e., $U$ is odd in time,  
\item[] ($s=4$)\quad $U(t+\pi, x) = U(t,x)$, i.e., $U$ is $\pi$-periodic,  
\item[] ($s=5$)\quad $U(t+\pi, x) = -U(t,x)$, i.e., $U$ is $\pi$-antiperiodic.  
\end{itemize}
For those symmetries the relevant modes $k\in\I_s\subset\Z$  in the corresponding Fourier expansions
come from the sets
\begin{equation}\label{eq:defn_modes}
 \I_1:=\Z,\qquad \mathcal I_2 := \Z,\qquad \mathcal I_3 :=\Z\sm\{0\},\qquad \mathcal I_4:= 2\Z,\qquad
 \I_5:= 2\Z+1.
\end{equation}
Accordingly, we look for functions $u_k=\overline{-u_k}$ ($k\in\I_s)$
in order to ensure that the solution $U$ given by~\eqref{eq:FourierAnsatzU} is
real-valued. In the case $s=2$ resp. $s=3$ observe that the symmetry assumption
even requires  $u_k$ resp. $iu_k$ to be real-valued.  Regarding $s=4$ let us
mention that general periods $T>0$ can be discussed, as we will explain in Section~\ref{sec:gen}. We will assume that the following conditions are satisfied for
$k\in\I_s$:
\begin{itemize}
  \item[(A1)]  There are
  bounded symmetric operators $\mathcal{R}_k:L^{q'}(\R^N)\to L^q(\R^N)$ for some $q\in [p,\infty]$ that satisfy $\mathcal R_k=\mathcal
  R_{-k}$, $\norm{\mathcal{R}_k} \leq C (k^2 + 1)^{-\frac{\alpha}{2}}$ for some $\alpha > 1 - \frac{2}{p}$ as
  well as
  $$
  \int_{\R^N} \mathcal{R}_k f \cdot (\L-k^2) \phi \dx = \int_{\R^N} f \phi\dx
  \qquad\text{for all }f\in L^{q'}(\R^N),\;\phi\in C_c^\infty(\R^N)
$$
 where, for some $m\in\N$, $\L:W^{m,\infty}(K)\to L^{q'}(\R^N)$ is a bounded linear map for all compact
  $K\subset\R^N$.
 \item[(A2)] $Q \in L^{q/(q-p)}(\R^N),Q\geq 0, Q\not\equiv 0$ and the linear operators
 $v\mapsto \mathcal{R}^Q_k [v] := Q^{1/p} \: \mathcal{R}_k [Q^{1/p} v]$ are compact from $L^{p'}(\R^N)$ to
 $L^p(\R^N)$.
 \item[(A3)] There are $\omega_k \in L^{p'}(\R^N)$ with
 $
	\int_{\R^N} \omega_k \mathcal{R}^Q_k[\omega_k] \dx > 0.
$
\end{itemize}
 Our convention is that functions belonging to
$L^r(\R^N),1\leq r\leq \infty$, are real-valued and $\mathcal{R}_k$ is extended to complex-valued
functions by linearity, i.e., $\mathcal{R}_k(f+ig):= \mathcal R_k f + i\mathcal R_k g$ for $f,g\in
L^{q'}(\R^N)$.

\medskip

We briefly comment on these assumptions. The operators $\mathcal{R}_k$ from (A1)  can be interpreted as
distributional right inverses of $\L -k^2$ that may even exist when classical inverses are not
available. This is for instance the case in our model example $\L=-\Delta$ as we will show in
Lemma~\ref{lem_LAP}. The symmetry assumption means
$$
  \int_{\R^N} \mathcal{R}_k f\cdot g \dx
  = \int_{\R^N}  f\cdot \mathcal{R}_k g \dx
  \quad\text{for all }f,g\in L^{q'}(\R^N).
$$
The growth bound on the norms ensures the convergence of Fourier series in topologies that are suitable for
our analysis. Assumption (A2) is  needed for our dual variational approach, notably for the verification
of the Palais-Smale condition for the dual functional $J$ associated with~\eqref{eq_general} that we will
introduce later. Notice that $Q \in L^{q/(q-p)}(\R^N),Q\geq 0$ and
(A1) already imply the boundedness of $\mathcal R_k^Q$ as a map from $L^{p'}(\R^N)$ to $L^p(\R^N)$,
see~\eqref{eq:Bound:RQ}, but compactness requires for some decay of $Q$ at infinity. In contrast, (A2) is
often violated in the important special case $Q\equiv 1$ so that our approach does   not apply in
this case. We refer to Section~\ref{sec:gen} for further observations about this case and other more general
nonlinearities.  Finally, (A3) is a technical assumption that holds in many
applications. It is for instance satisfied if $Q$ is positive and for all $k\in\I_s$ there are test functions
$\phi_k\in C_c^\infty(\R^N)$ such that the inequality $\int_{\R^N} \phi_k ( \L -k^2)\phi_k\dx>0$ holds.
Indeed, choosing $\omega_k:=\omega_k^\delta$ for sufficiently small $\delta>0$ where
$\omega_k^\delta := Q^{-1/p}1_{Q>\delta} ( \L -k^2)\phi_k\in L^{p'}(\R^N)$ we obtain
\begin{align*}
  \lim_{\delta\to 0^+} \int_{\R^N} \omega_k^\delta \mathcal{R}^Q_k[\omega_k^\delta] \dx
  &= \lim_{\delta\to 0^+} \int_{\R^N} \big(1_{Q>\delta} (\L -k^2)\phi_k\big) \mathcal{R}_k[ 1_{Q>\delta}
  (\L -k^2)\phi_k] \dx
  \\
  &= \int_{\R^N} \big((\L -k^2)\phi_k \big)\mathcal{R}_k[(\L -k^2)\phi_k] \dx \\
  &= \int_{\R^N} \phi_k (\L -k^2)\phi_k\dx >0.
\end{align*}
Hence, (A3) holds for positive $Q$  provided that $\L$
is a uniformly elliptic differential operator on $\R^N$, say of the form $\sum_{|\alpha|\leq 2m}
a_\alpha(x)\partial^\alpha$ with locally integrable coefficients $a_\alpha$ and
$\sum_{|\alpha|=2m} a_\alpha(x) (i\xi)^\alpha \geq c|\xi|^{2m}$ for all
$x,\xi\in\R^N$ and some $c>0$.
Indeed, in that case one may choose $\phi_k=\phi(t_k\cdot)$ for some test function $\phi\in
C_c^\infty(\R^N)$ and $t_k>0$ sufficiently large, see the proof of Corollary~\ref{cor:CossettiMandel} for the
details in the case $\L=(-\Delta)^2$.  In Section~\ref{sec:App} we will see a number of
settings where all our assumptions hold.

\medskip

Assuming (A1)-(A3) for all modes $k\in\I_s$ we are going to prove the existence of breather solutions $U$
of~\eqref{eq_general} in the Banach space $L^q({\R^N}, L^p_s(\T))$ consisting of all elements of
$L^q({\R^N},L^p(\T))$ having the symmetry indexed by $s$.
The norm on these spaces is given by  
$$
  \|W\|_{L^q({\R^N}, L^p(\T))}
  := \norm{  \norm{W(\cdot,x)}_{L^p(\T)} }_q
  = \left( \int_{{\R^N}}  \left(\int_\T |W(t,x)|^p\dt\right)^{q/p} \dx\right)^{1/q},
$$
where $\|\cdot\|_q=\|\cdot\|_{L^q(\R^N)}$ is the standard norm in $L^q(\R^N)$
and $\T\simeq [0,2\pi]$ stands for the torus. More precisely, we will speak of $2\pi$-periodic real-valued  distributional breather solutions since we
require these functions to solve equation~\eqref{eq_general} in the following sense:
\begin{align}\label{eq_weaksolution}
	\int_{\T \times {\R^N}}  U \: (\partial_{tt}  - \L) \Phi \dtx
	= \int_{\T \times {\R^N}} Q(x)|U|^{p-2}U \: \Phi \dtx
	\quad \forall \Phi \in C_c^\infty({\R^N},C^\infty(\T)).
\end{align}
Here, $\Phi \in C_c^\infty({\R^N},C^\infty(\T))$ means that there is a compact subset $K\subseteq{\R^N}$
such that $\Phi:\T\times{\R^N}\to\R$  is smooth, $2\pi$-periodic
in time and the support of $\Phi(t,\cdot)$ is contained in $K$ for all $t\in\T$.
Our main result is the following.

\begin{thm}\label{thm_general}
  Assume $N\in\N$,  $2<p<\infty$ and (A1)-(A3) for all
  $k\in\I_s$ where $s\in\{1,\ldots,5\}$.
  Then the nonlinear wave equation~\eqref{eq_general} admits an unbounded sequence of
  $2\pi$-periodic distributional real-valued  breather solutions $U_j \in L^q({\R^N}, L^p_s(\T))$, $j\in
  \N_0$, in the sense of~\eqref{eq_weaksolution}.
\end{thm}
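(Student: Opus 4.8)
The plan is to rewrite \eqref{eq_general} as a critical-point problem for a \emph{dual} functional on a Lebesgue space, where the strong indefiniteness of the wave operator disappears. Seeking $U(t,x)=\sum_{k\in\I_s}\e{\i kt}u_k(x)$ and testing \eqref{eq_weaksolution} frequency by frequency, the equation for the $k$-th mode reads $(\L-k^2)u_k=F_k$ with $F:=Q|U|^{p-2}U$, which by (A1) is solved by $u_k=\mathcal R_k[F_k]$. Introducing the dual variable $v:=Q^{1/p'}|U|^{p-2}U$, the identities (checked using $(p-1)(p'-1)=1$)
\[
  Q|U|^{p-2}U=Q^{1/p}v,\qquad U=Q^{-1/p}|v|^{p'-2}v\ \text{ on }\{Q>0\},\ v=0\ \text{ on }\{Q=0\},
\]
give $F_k=Q^{1/p}v_k$, and the whole mode system collapses to the single equation
\[
  |v|^{p'-2}v=\mathcal K v,\qquad \mathcal K v:=\sum_{k\in\I_s}\e{\i kt}\,\mathcal R_k^Q[v_k],
\]
for $v$ in the closed subspace $X_s\subset L^{p'}(\T\times\R^N)$ of real-valued functions with the $s$-symmetry; here $v_k$ are the temporal Fourier coefficients and $\mathcal R_k^Q$ is as in (A2). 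This is the Euler--Lagrange equation of
\[
  J(v):=\frac1{p'}\int_{\T\times\R^N}|v|^{p'}\dtx-\frac12\int_{\T\times\R^N}v\,\mathcal K v\dtx,\qquad v\in X_s.
\]
So the first step is to make this precise: using $Q\in L^{q/(q-p)}(\R^N)$, H\"older's inequality, the Hausdorff--Young inequality on $\T$, H\"older in the mode index $k$, and the bound $\norm{\mathcal R_k}\le C(k^2+1)^{-\alpha/2}$ with $\alpha>1-\tfrac2p$, one shows that $\mathcal K$ is a bounded symmetric operator from $X_s$ to $L^p(\T\times\R^N)$, so that $J\in C^1(X_s,\R)$ is even. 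Then, given any critical point $v$ of $J$, setting $u_k:=\mathcal R_k[Q^{1/p}v_k]$ and $U:=\sum_k\e{\i kt}u_k$, the same estimates give $U\in L^q(\R^N,L^p_s(\T))$ with $\norm U\gtrsim\norm v_{p'}^{p'/p}$, and unwinding the identities above shows $U$ solves \eqref{eq_weaksolution}. It therefore suffices to produce an unbounded sequence of critical points of $J$.

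The second step is the Palais--Smale condition, and here the decisive point is that $\mathcal K$ is \emph{compact}: each $\mathcal R_k^Q$ is compact from $L^{p'}(\R^N)$ to $L^p(\R^N)$ by (A2), hence each summand $v\mapsto\e{\i kt}\mathcal R_k^Q[v_k]$ is compact on $X_s$, and by the decay $\norm{\mathcal R_k}\le C(k^2+1)^{-\alpha/2}$ with $\alpha>1-\tfrac2p$ the truncations retaining only $|k|\le M$ converge to $\mathcal K$ in operator norm, so $\mathcal K$ is a norm limit of compact operators. Given a Palais--Smale sequence $v_n$, the identity $J(v_n)-\tfrac12J'(v_n)v_n=(\tfrac1{p'}-\tfrac12)\norm{v_n}_{p'}^{p'}$ with $\tfrac1{p'}-\tfrac12>0$ forces $(v_n)$ to be bounded; passing to a weak limit $v_n\rightharpoonup v$ in the reflexive space $X_s$, compactness gives $\mathcal Kv_n\to\mathcal Kv$ strongly, hence $|v_n|^{p'-2}v_n\to\mathcal Kv$ strongly in $L^p$, and applying the continuous inverse $w\mapsto|w|^{p-2}w$ of the duality map yields $v_n\to|\mathcal Kv|^{p-2}\mathcal Kv=v$ strongly. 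Thus $J$ satisfies (PS) on $X_s$.

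For the geometry and the multiplicity I would pass to the dual Nehari manifold $\mathcal M:=\{v\in X_s\sm\{0\}:J'(v)v=0\}$. On $\mathcal M$ one has $J(v)=(\tfrac1{p'}-\tfrac12)\norm v_{p'}^{p'}$, and the identity $\norm v_{p'}^{p'}=\int v\,\mathcal Kv\le C\norm v_{p'}^2$ forces $\norm v_{p'}\ge(1/C)^{1/(2-p')}$, so $\inf_{\mathcal M}J>0$; since $p'\ne2$, $\mathcal M$ is a natural constraint, i.e.\ critical points of $J|_{\mathcal M}$ are critical points of $J$. Assumption (A3) is exactly what makes the minimax scheme work: for each $k\in\I_s$ it supplies $\omega_k\in L^{p'}(\R^N)$ with $\int\omega_k\,\mathcal R_k^Q[\omega_k]>0$, so the single-frequency functions $\cos(kt)\omega_k(x)$ or $\sin(kt)\omega_k(x)$ (according to $s$) lie in $X_s$, and --- using the $L^2$-orthogonality of distinct temporal modes together with $\mathcal R_k^Q=\mathcal R_{-k}^Q$ --- the quadratic form $v\mapsto\int v\,\mathcal Kv$ is positive definite, hence coercive, on the span $W_n$ of any $n$ of them. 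Since $\I_s$ is infinite, such $W_n$ exist for all $n$; on $W_n$ one has $J\to-\infty$, and the odd continuous radial retraction onto $\mathcal M$ sends the unit sphere of $W_n$ to a compact symmetric subset of $\mathcal M$ of Krasnoselskii genus $n$ on which $J$ is bounded. A standard genus-based Lusternik--Schnirelmann argument for even functionals on $\mathcal M$ (legitimate by (PS) and $\inf_{\mathcal M}J>0$) then yields an unbounded sequence of critical values of $J$, hence infinitely many critical points $v_j$ with $J(v_j)\to\infty$ and therefore $\norm{v_j}_{p'}\to\infty$. The associated $U_j\in L^q(\R^N,L^p_s(\T))$ from the first step solve \eqref{eq_weaksolution} and obey $\norm{U_j}\gtrsim\norm{v_j}_{p'}^{p'/p}\to\infty$, which is the claim.

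The step I expect to be the main obstacle is making the dual operator $\mathcal K$ rigorous and, above all, proving its compactness: this is where (A2) and the sharp decay exponent $\alpha>1-\tfrac2p$ in (A1) are indispensable, and it requires the careful mixed-norm estimates (Hausdorff--Young on the circle combined with H\"older in the frequency index) alluded to above. The indefiniteness of the quadratic form $v\mapsto\int v\,\mathcal Kv$ is a secondary difficulty, circumvented --- rather than by a subspace splitting, which fails because the pointwise nonlinearity does not respect any spectral subspace --- by working on the dual Nehari manifold and exploiting that (A3) provides one positive direction per temporal mode.
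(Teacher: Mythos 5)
Your proposal is built on exactly the same dual variational framework as the paper: the same substitution $V=Q^{1/p'}|U|^{p-2}U$, the same mode-summed operator $\mathcal K=\mathscr R$, the same dual functional $J$, the same mixed-norm Hausdorff--Young/H\"older estimate to prove boundedness, and the same norm-limit argument to prove compactness of the dual operator. Your recovery of $U$ from a critical point, and the estimate $\norm{U}_q\gtrsim\norm{V}_{p'}^{p'-1}$ showing unboundedness of $(U_j)$, also coincide with the paper. The one place you genuinely depart from the paper is the multiplicity step: you restrict to a (dual) Nehari manifold $\mathcal M$ and invoke a genus-based Lusternik--Schnirelmann argument, whereas the paper works directly on $X_s^{p'}$ and applies the Symmetric Mountain Pass Theorem \cite[Corollary~7.23]{ghoussoub} after verifying the local minimum at $0$, the negative cone on finite-dimensional subspaces spanned by $\cos(kt)\omega_k$ or $\sin(kt)\omega_k$, and the Palais--Smale condition. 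Both routes exploit (A3) identically, via the orthogonality of distinct temporal modes making the quadratic form $\int V\mathscr R[V]$ positive definite (indeed diagonal after normalizing $\int\omega_k\mathcal R_k^Q[\omega_k]$) on these subspaces; the paper's route is a little more economical since the Symmetric Mountain Pass Theorem delivers \emph{unbounded} critical values as part of its conclusion.

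Two points in your write-up are less careful than they should be. First, the assertion that the genus-LS scheme ``yields an unbounded sequence of critical values of $J$, hence infinitely many critical points $v_j$ with $J(v_j)\to\infty$'' does not follow merely from (PS), $\inf_{\mathcal M}J>0$, and the availability of genus-$n$ sets for all $n$: those hypotheses guarantee infinitely many distinct critical values $c_n$ but not a priori $c_n\to\infty$. To obtain unboundedness in the Nehari framework you would additionally have to show that the $J$-values on the genus-$n$ retracts of spheres in $W_n$ grow, which requires tracking the decay $\norm{\mathcal R_k^Q}\lesssim(k^2+1)^{-\alpha/2}$ as the modes $k_1<\cdots<k_n$ increase; this is precisely the kind of quantitative input that the Symmetric Mountain Pass Theorem already packages (it is used there in the form of the radii $R_m$). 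Second, ``unwinding the identities above shows $U$ solves~\eqref{eq_weaksolution}'' hides a non-trivial step: the paper's Proposition~\ref{prop_wavesolution} first verifies the weak formulation for test functions that are finite trigonometric polynomials in $t$ with modes in $\I_s$, then removes the restriction on modes using orthogonality, and finally passes to arbitrary $\Phi\in C_c^\infty(\R^N,C^\infty(\T))$ by a tail estimate on the Fourier coefficients $\phi_k$ that relies on the hypothesis $\L:W^{m,\infty}(K)\to L^{q'}(\R^N)$ bounded contained in (A1) -- the only place that hypothesis is actually used. Your plan would need these steps made explicit to close the argument.
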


 We add that our breather solutions are either constant or polychromatic. The latter means
 that at least two Fourier modes $u_k,u_l$ with $k\neq l,k,l\in\N_0$ of the solution are non-zero. Indeed,
 plugging the ansatz $U(t,x) =\cos(kt)u_k(x)$ or $U(t,x) =\sin(kt)u_k(x)$  and nontrivial
 functions $u_k$ into~\eqref{eq_general} one infers $k=0$, so $U$ is necessarily constant
 in time. In our applications below this will be avoided by choosing $s\in\{3,5\}$ because of $0\notin \I_s$.
 In the case $p\in\{3,4,\ldots\}$ we can conclude as in~\cite[Theorem~1~(iii)]{scheider2020breather}
 that nonconstant in time breathers have in fact infinitely many nontrivial modes, which we believe to be the
 typical situation.
 Concerning the regularity of our breather solutions, we point out that, proceeding as in the proof of
 Proposition~\ref{prop_waveresolvent} below, it is possible to prove (local) Sobolev-regularity of solutions
 provided that estimates of the form
 $$
  \norm{\mathcal{R}_k f}_{W^{m,r}(K)}  \leq C (k^2 + 1)^{-\frac{\tilde\alpha}{2}}
  (\|f\|_{s_1}+\|f\|_{s_2})
$$
hold for suitable $K\subset\R^N$, $m\in\N$, $s_1,s_2,r\in [1,\infty]$ and  $\tilde\alpha>0$ such
that the corresponding Fourier series converge. However, we could not verify such an estimate for
large enough $\tilde \alpha>0$ in our applications, so we have to leave the regularity issue as an open
problem. We expect that in the case $\L=-\Delta$ the spatial decay of our breather solutions is $U(t,x)\sim
|x|^{(1-N)/2}$ and hence similar to the spatial decay of monochromatic complex-valued  solutions of the form
$\e{\i kt}u(x)$ that are modeled by a single Helmholtz equation instead of infinitely many coupled ones.
Similarly, we expect that the breathers oscillate in the sense that they have infinitely many nodal domains.
Given that the physical model requires for real-valued solutions, our results indicate that complex-valued
waves of the form $\e{\i kt}u(x)$ provide a reasonable simplified model for breather solutions.

 \medskip

 We outline how this paper is organized. In Section~\ref{sec:App} we show how
 Theorem~\ref{thm_general} applies in concrete situations. In particular, we prove the existence of
 infinitely many breathers of nonlinear wave equations and Klein-Gordon equations on $\R^N$. Moreover, we
 indicate further possible generalizations of our approach. In Section~\ref{sec:ProofThm} we motivate our variational approach and
 present the proof of Theorem~\ref{thm_general} relying on the technical results contained  in the
 Propositions~\ref{prop_waveresolvent}--\ref{prop_wavesolution}. The proofs of the latter are presented
 in Section~\ref{sec:ProofAux}.

\section{Applications and Examples} \label{sec:App}

\subsection{Breather Solutions for the Wave Equation on $\R^N$}

We show that Theorem~\ref{thm_general} applies to classical nonlinear wave equations on $\R^N$
with power-type nonlinearities
\begin{equation}\label{eq_wave}
	\partial_{tt}  U - \Delta U = Q(x) |U|^{p-2} U
	\qquad \text{on } \T \times \R^N.
\end{equation}
To verify (A1)--(A3) we need distributional right  inverses for operators of the form
$-\Delta-k^2$ for  $k\in\I_s$ and suitable $s\in\{1,\ldots,5\}$. From  \cite[Theorem~2.3]{Kenig},
\cite[Theorem~6]{Gutierrez} $(N\geq 3)$ and~\cite[Theorem~2.1]{Evequoz_plane} ($N = 2$) we infer that   the operators
$$
  \mathcal R_k f:= \lim_{\eps\to 0^+} \Real\left[\cF^{-1}\left(\frac{\cF f}{|\cdot|^2-k^2-i\eps}
  \right)\right]
$$
are suitable for that purpose. Here, $\cF$ denotes the Fourier transform in $\R^N$. For the asymptotics with
respect to $k$ in the estimate~\eqref{eq:KRS}  we refer to~\cite[Theorem~2.3]{Kenig} ($N\geq 3$) and
inequality~(8) in \cite{Frank_EVBounds} ($N=2$).

\begin{lem}[Kenig, Ruiz, Sogge] \label{lem_LAP}
  Let $N\in\N,N\geq 3$ and assume $\frac{2(N+1)}{N-1}\leq r\leq \frac{2N}{N-2}$.
  For every $k\in\Z\sm\{0\}$ the  operator  $\mathcal{R}_k:L^{r'}(\R^N)\to L^r(\R^N)$ is a
  bounded and symmetric distributional right  inverse of $-\Delta-k^2$ satisfying
  \begin{equation}\label{eq:KRS}
    \|\mathcal R_k f\|_r \leq C |k|^{-2+\frac{N}{r'}-\frac{N}{r}}\|f\|_{r'}
  \end{equation}
  for some $C>0$. In the case $N= 2$ the same holds for $6\leq r<\infty$.
\end{lem}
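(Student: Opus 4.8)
\textbf{Proof strategy for Lemma~\ref{lem_LAP}.}
The plan is to reduce everything to the classical case $k=1$ by a scaling argument, and then invoke the known $L^{r'}$--$L^r$ resolvent estimates for the Helmholtz operator $-\Delta - 1$. First I would recall the Kenig--Ruiz--Sogge uniform Sobolev inequality, which gives a bounded operator $\mathcal R_1 \colon L^{r'}(\R^N) \to L^r(\R^N)$, defined as the real part of the limiting absorption resolvent $\lim_{\eps\to 0^+}\cF^{-1}\big((|\cdot|^2 - 1 - i\eps)^{-1}\cF(\cdot)\big)$, with the bound $\|\mathcal R_1 f\|_r \le C\|f\|_{r'}$ valid precisely in the range $\frac{2(N+1)}{N-1} \le r \le \frac{2N}{N-2}$. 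The symmetry $\int \mathcal R_1 f \cdot g = \int f \cdot \mathcal R_1 g$ follows from the fact that the Fourier multiplier $(|\xi|^2 - 1 - i\eps)^{-1}$ is even in $\xi$ and from taking real parts, so $\mathcal R_1$ is self-adjoint on the relevant dual pairing; the distributional right-inverse property $\int \mathcal R_1 f \cdot (-\Delta - 1)\phi = \int f\phi$ for $\phi \in C_c^\infty$ follows by moving the multiplier onto $\cF\phi$ and using $\Real$ together with the symmetry of the resolvent under $\eps \to \eps$.

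Next I would carry out the scaling. For $k \ne 0$ write $f_k(x) := f(x/|k|)$; a direct computation with the Fourier transform shows that $(-\Delta - k^2)^{-1}$ at frequency $\xi$ corresponds to $|k|^{-2}(-\Delta - 1)^{-1}$ at frequency $\xi/|k|$, so that $\mathcal R_k f = |k|^{-2}\,\big(\mathcal R_1(f(|k|^{-1}\cdot))\big)(|k|\,\cdot)$, with the limiting absorption limit passing through the scaling since $(|\xi|^2 - k^2 - i\eps) = |k|^2(|\xi/k|^2 - 1 - i\eps/k^2)$ and $\eps/k^2 \to 0^+$ as well. Then I would compute the norms: $\|f(|k|^{-1}\cdot)\|_{r'} = |k|^{N/r'}\|f\|_{r'}$ and, for the output, $\|\,(\mathcal R_1 g)(|k|\cdot)\,\|_r = |k|^{-N/r}\|\mathcal R_1 g\|_r$. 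Combining with the extra factor $|k|^{-2}$ and the $k=1$ bound $\|\mathcal R_1 g\|_r \le C\|g\|_{r'}$ yields exactly
$$
  \|\mathcal R_k f\|_r \le C\,|k|^{-2}\,|k|^{-N/r}\,|k|^{N/r'}\,\|f\|_{r'} = C\,|k|^{-2 + \frac{N}{r'} - \frac{N}{r}}\|f\|_{r'},
$$
which is~\eqref{eq:KRS}. Boundedness, symmetry and the right-inverse property for $\mathcal R_k$ all transfer from $\mathcal R_1$ under this conjugation by dilations, since dilations are bounded invertible on every $L^r$ and commute with $-\Delta$ up to the factor $|k|^2$ that is already accounted for.

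Finally, for $N = 2$ I would replace the Kenig--Ruiz--Sogge input by the two-dimensional Helmholtz resolvent estimate of Evéquoz (cited as \cite[Theorem~2.1]{Evequoz_plane}), which gives the bounded operator $\mathcal R_1 \colon L^{r'}(\R^2) \to L^r(\R^2)$ for $6 \le r < \infty$, together with the $k$-asymptotics from inequality~(8) of \cite{Frank_EVBounds}; the scaling argument is verbatim the same, and the exponent in~\eqref{eq:KRS} in two dimensions reads $-2 + \frac{2}{r'} - \frac{2}{r}$. The main obstacle is not the scaling bookkeeping, which is routine, but making sure the limiting absorption principle genuinely produces a \emph{distributional right inverse} in the sense required by (A1) — i.e. that $\mathcal R_k$ maps into $L^r$ and satisfies the tested identity against $C_c^\infty$ — rather than merely a bounded operator on frequency side; this is exactly the content of the cited theorems and the only step where one must be careful that taking the real part does not destroy the right-inverse property (it does not, because $(-\Delta - k^2)\phi$ is real for real $\phi$, so the imaginary part pairs to zero).
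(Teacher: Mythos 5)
Your scaling argument is correct and is essentially the argument that underlies the references cited by the paper. In fact the paper does not present a proof of Lemma~\ref{lem_LAP} at all: right after the statement it remarks that the lemma is a special case of Lemma~\ref{lem_LAP_CosMan} (Cossetti--Mandel) and otherwise points to \cite[Theorem~2.3]{Kenig}, \cite[Theorem~6]{Gutierrez}, \cite[Theorem~2.1]{Evequoz_plane}, and inequality~(8) of \cite{Frank_EVBounds} for the $k$-asymptotics. Your reduction $\mathcal R_k f = |k|^{-2}\big(\mathcal R_1(f(|k|^{-1}\cdot))\big)(|k|\cdot)$, the bookkeeping $\|f(|k|^{-1}\cdot)\|_{r'}=|k|^{N/r'}\|f\|_{r'}$ and $\|g(|k|\cdot)\|_r=|k|^{-N/r}\|g\|_r$, and the passage from the uniform ($k=1$) Kenig--Ruiz--Sogge inequality to~\eqref{eq:KRS} are exactly right; the symmetry follows from evenness of the multiplier, and your remark that the right-inverse identity survives taking real parts because $(-\Delta-k^2)\phi$ is real and the total pairing equals the real number $\int f\phi$ is also sound (one just needs that the $\eps$-limit holds in $L^r$ so that it commutes with pairing against $(-\Delta-k^2)\phi\in L^{r'}$). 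So your write-up is a correct, self-contained version of what the paper merely cites; no gap.
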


Notice that Lemma~\ref{lem_LAP} can also be seen as a special case of Lemma~\ref{lem_LAP_CosMan} below
and thus of~\cite[Theorem~4]{CosMan}, so we do not provide a proof here. 
We stress that this result does not provide a distributional right  inverse for $-\Delta$,
which is why we have to consider symmetries that exclude the zero mode. This and~\eqref{eq:defn_modes}
motivate the choice $s\in\{3,5\}$, so that we obtain the existence of infinitely many
odd-in-time $2\pi$-periodic breathers and infinitely many $\pi$-antiperiodic breathers.

\begin{cor}[The Wave Equation]\label{cor_wave}
Assume $N\in\N,N\geq 2$ and $Q\in L^\frac{q}{q-p}(\R^N),Q\geq 0, Q\not\equiv 0$ where $p,q$
satisfy
\begin{align*}
	&2 < p < \frac{2(N+1)}{N-1},
	\qquad
	 \frac{2(N+1)}{N-1} < q < \frac{2Np}{(N-1)p - 2}.
\end{align*}
Then, for $s\in\{3,5\}$, the nonlinear  wave equation~\eqref{eq_wave} admits an unbounded sequence of
$2\pi$-periodic real-valued  distributional breather solutions $U_j \in  L^q(\R^N;L^p_s(\T)), \, j \in \N_0$.
\end{cor}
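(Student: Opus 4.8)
The plan is to verify the abstract hypotheses (A1)--(A3) of Theorem~\ref{thm_general} for $\L=-\Delta$ and every mode $k\in\I_s$, with $s\in\{3,5\}$; the assertion then follows at once from that theorem. The choice $s\in\{3,5\}$ is forced rather than cosmetic: by~\eqref{eq:defn_modes} it guarantees $0\notin\I_s$, and Lemma~\ref{lem_LAP} supplies the operators $\mathcal R_k$ only for $k\neq 0$, reflecting the fact that $-\Delta$ itself has no distributional right inverse of the required type.

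First I would settle (A1) by applying Lemma~\ref{lem_LAP} with $m=2$ and $r=q$. One checks that $q$ lies in the admissible interval: the bound $q>\tfrac{2(N+1)}{N-1}$ is assumed, while $\tfrac{2Np}{(N-1)p-2}\le\tfrac{2N}{N-2}$ for $N\ge 3$ (and the right-hand side is $+\infty$ for $N=2$), which after clearing denominators is exactly $p\ge 2$. Lemma~\ref{lem_LAP} then yields that $\mathcal R_k\colon L^{q'}(\R^N)\to L^q(\R^N)$ is a bounded, symmetric distributional right inverse of $-\Delta-k^2$ with $\mathcal R_k=\mathcal R_{-k}$ (evident from the formula) and $\|\mathcal R_k f\|_q\le C|k|^{-\alpha}\|f\|_{q'}$, where $\alpha := 2-\tfrac{N}{q'}+\tfrac{N}{q}=2-N+\tfrac{2N}{q}$. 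The decisive observation is that the prescribed bound $q<\tfrac{2Np}{(N-1)p-2}$ is \emph{equivalent} to $\alpha>1-\tfrac2p$, so the decay $\|\mathcal R_k\|\le C(k^2+1)^{-\alpha/2}$ required in (A1) holds; finally $-\Delta\colon W^{2,\infty}(K)\to L^\infty(K)\hookrightarrow L^{q'}(\R^N)$ is bounded for every compact $K$.

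For (A2), boundedness of $\mathcal R_k^Q=M_{Q^{1/p}}\mathcal R_kM_{Q^{1/p}}$ from $L^{p'}$ to $L^p$ is immediate from $Q\in L^{q/(q-p)}(\R^N)$ and H\"older's inequality with the exponent relation $\tfrac1{q'}=\tfrac1{p'}+(\tfrac1p-\tfrac1q)$, giving $\|\mathcal R_k^Q\|\le\|Q\|_{q/(q-p)}^{2/p}\|\mathcal R_k\|$ --- this is~\eqref{eq:Bound:RQ}. For compactness I would first reduce to $Q\in C_c(\R^N)$: approximating $Q$ in $L^{q/(q-p)}$ and using $|a^{1/p}-b^{1/p}|\le|a-b|^{1/p}$ shows $\mathcal R_k^{Q_n}\to\mathcal R_k^Q$ in operator norm. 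For $Q$ bounded with support $K$ one then composes the bounded map $M_{Q^{1/p}}\colon L^{p'}\to L^{q'}(\R^N)$, the map $\mathcal R_k\colon L^{q'}(\R^N)\to W^{2,q'}(K')$ (bounded for $K'\supset K$ by interior Calder\'on--Zygmund estimates applied to the identity $(-\Delta-k^2)\mathcal R_kf=f$ together with the $L^q$-bound from Lemma~\ref{lem_LAP}), the \emph{compact} embedding $W^{2,q'}(K')\hookrightarrow\hookrightarrow L^q(K')$ (Rellich--Kondrachov, using $q<\tfrac{2Np}{(N-1)p-2}<\tfrac{2N}{N-2}$, with any finite exponent allowed when $N=2$), and the bounded map $M_{Q^{1/p}}\colon L^q(K')\to L^p$; the composition is compact.

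The step I expect to be the main obstacle is (A3): the convenient sufficient condition recorded after (A3) presupposes that $Q$ is \emph{positive}, whereas here $Q\ge 0$ may vanish outside a set with empty interior, so there need be no $\phi_k\in C_c^\infty$ with $(-\Delta-k^2)\phi_k$ supported in $\{Q>0\}$. Instead I would exploit positivity of the integral kernel of $\mathcal R_k$ near the diagonal: since $\mathcal R_k$ is a Fourier multiplier, it is convolution with the real part of the outgoing fundamental solution of $-\Delta-k^2$, namely $G_k(z)=-\tfrac14\bigl(\tfrac{k}{2\pi|z|}\bigr)^{(N-2)/2}Y_{(N-2)/2}(k|z|)$, which is locally integrable and \emph{strictly positive} for $0<k|z|<c_N$, with $c_N>0$ the first positive zero of $Y_{(N-2)/2}$. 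Given $k\in\I_s$, I would fix a Lebesgue density point $x_0$ of the positive-measure set $\{Q>0\}$, a radius $\rho<c_N/(2k)$, write $B:=B(x_0,\rho)$, and choose $\delta>0$ so small that $|\{Q>\delta\}\cap B|>0$; then $\omega_k:=Q^{-1/p}1_{\{Q>\delta\}\cap B}$ is bounded with compact support, hence $\omega_k\in L^{p'}(\R^N)$, and $Q^{1/p}\omega_k=1_{\{Q>\delta\}\cap B}=:f\ge 0$ with $f\not\equiv 0$. By Fubini (justified by local integrability of $G_k$ and boundedness of $f$ on the compact set $B$),
\[
  \int_{\R^N}\omega_k\,\mathcal R_k^Q[\omega_k]\dx
  = \int_{\R^N} f\,\mathcal R_k f\dx
  = \iint_{B\times B} G_k(x-y)\,f(x)\,f(y)\dx\dy > 0,
\]
because $G_k>0$ on $B\times B$. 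Thus (A1)--(A3) hold for all $k\in\I_s$, and Theorem~\ref{thm_general} delivers the asserted unbounded sequence $(U_j)_{j\in\N_0}$ of breather solutions in $L^q(\R^N;L^p_s(\T))$.
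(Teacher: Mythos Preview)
Your proposal is correct and follows the same overall scheme as the paper: verify (A1)--(A3) for $\L=-\Delta$ and invoke Theorem~\ref{thm_general}. The treatment of (A1) is essentially identical to the paper's.

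The differences lie in (A2) and (A3), where the paper proceeds by citation while you supply self-contained arguments. For (A2) the paper merely refers to \cite[Lemma~4.1]{EvequozWeth}, \cite[Section~3]{Evequoz_plane} and \cite[Lemma~3.1]{GolSch}; your approximation-plus-interior-Calder\'on--Zygmund-plus-Rellich argument is precisely the mechanism the paper later spells out in the proof of Corollary~\ref{cor_wave_frac}, so you are making explicit what the cited references contain. For (A3) you correctly flag that the sufficient criterion from the Introduction requires $Q>0$ and does not directly apply here; the paper sidesteps this by invoking \cite[Lemma~4.2(ii)]{EvequozWeth} and rescaling. Your alternative---choosing a Lebesgue density point of $\{Q>0\}$, a ball of radius below $c_N/(2k)$, and exploiting the strict positivity of the real Helmholtz kernel $G_k$ on that ball---is a clean, self-contained reconstruction of essentially the same idea that underlies the cited lemma. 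It has the advantage of being transparent about why only $Q\ge 0$, $Q\not\equiv 0$ is needed, at the cost of requiring the explicit form and sign of the kernel near the diagonal.
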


\textbf{Proof of Corollary~\ref{cor_wave}.}
We verify the assumptions (A1) - (A3) for $\mathcal{L} = -\Delta$. As indicated above,
the choice $s\in\{3,5\}$ implies $\I_s\subset\Z\sm\{0\}$ so that $k\in\I_s$ implies $|k|\geq 1$. In
particular, the previous lemma applies and yields real-valued, bounded, symmetric linear operators $\mathcal
R_k$ that are distributional right  inverses of $-\Delta-k^2$ and satisfy
$$
  \|\mathcal R_k f\|_q \leq C(k^2+1)^{-\alpha/2}  \|f\|_{q'}
  \qquad (k\in\I_s)
$$
for $\alpha=2-\frac{N}{q'}+\frac{N}{q}$. Here we have used that our assumptions
imply $\frac{2(N+1)}{N-1}\leq q<\frac{2N}{N-2}$. From $q < \frac{2Np}{(N-1)p - 2}$ we moreover infer
$\alpha>1-\frac{2}{p}$. So assumption (A1) holds. The compactness of the Birman-Schwinger operator $\mathcal R_k^Q$ is proved as
in~\cite[Lemma~4.1]{EvequozWeth} $(N\geq 3)$ resp.~\cite[Section~3]{Evequoz_plane} $(N=2)$ or \cite[Lemma~3.1]{GolSch}.
(We will provide more details in the proof of Corollary~\ref{cor_wave_frac} below.) Taking
$\omega_k:= v_0(k\cdot)$ for the function $v_0$ from~\cite[Lemma~4.2~(ii)]{EvequozWeth} we find that (A3)
holds as well. Hence, Theorem~\ref{thm_general} yields the existence of an unbounded sequence of
distributional breathers in $L^q(\R^N;L^p_s(\T))$.
 \hfill $\square$

\subsection{Breather Solutions for the Klein-Gordon Equation}

We study the nonlinear  Klein-Gordon equation
\begin{equation}\label{eq_kleingordon}
	\partial_{tt}  U - \Delta U + m^2 U = Q(x) |U|^{p-2} U
	\qquad \text{on } \T \times \R^N.
\end{equation}
Much like for the wave equation, we deduce from Theorem~\ref{thm_general} the following

\begin{cor}[The Klein-Gordon Equation]\label{cor_kleingordon}
Assume $N\in\N,N\geq 2$, $m>0$ and $Q\in L^\frac{q}{q-p}(\R^N),Q>0 $
where $p,q$ satisfy
\begin{align*}
	 2 < p < \frac{2(N+1)}{N-1},
	\qquad
	 \frac{2(N+1)}{N-1} < q < \frac{2Np}{(N-1)p - 2}.
\end{align*}
Then the nonlinear  Klein-Gordon equation~\eqref{eq_kleingordon} admits an unbounded sequence of
$2\pi$-periodic real-valued  breather solutions  $U_j \in  L^q(\R^N;L^p_s(\T)), \, j \in \N_0$.
 Here, $s$ can be chosen as follows:
\begin{itemize}
\item[(i)] if $m \not\in \N$, then $s\in\{1,\ldots,5\}$,
\item[(ii)] if $m \in 2\N - 1$, then $s=4$ ($\pi$-periodic breathers),
\item[(iii)] if $m \in 2\N$, then $s=5$ ($\pi$-antiperiodic breathers).
\end{itemize}
\end{cor}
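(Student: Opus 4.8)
The plan is to deduce Corollary~\ref{cor_kleingordon} from Theorem~\ref{thm_general} by verifying (A1)--(A3) for the operator $\L=-\Delta+m^2$ and the modes $k\in\I_s$. The key point is that $\L-k^2 = -\Delta+(m^2-k^2) = -\Delta - (k^2-m^2)$, so that the relevant resolvent is precisely a Helmholtz-type operator at frequency-squared $k^2-m^2$. First I would observe that the sign of $k^2-m^2$ dictates which theory applies: if $k^2-m^2>0$ we are in the genuine Helmholtz regime and can invoke the Kenig--Ruiz--Sogge type bounds (Lemma~\ref{lem_LAP}, applied with $k$ replaced by the effective frequency $\sqrt{k^2-m^2}$); if $k^2-m^2<0$ the operator $-\Delta+(m^2-k^2)$ is positive definite and invertible with a Bessel-potential kernel, giving an even better (exponentially decaying) right inverse and improved $L^{q'}\to L^q$ bounds; and the borderline case $k^2=m^2$ must be excluded, which is exactly what the case distinction on $m$ achieves.

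Second I would handle the exclusion of the bad mode $k^2=m^2$. If $m\notin\N$ then $k^2\neq m^2$ for every $k\in\Z$, so no symmetry restriction is needed and any $s\in\{1,\dots,5\}$ works; this is case~(i). If $m\in 2\N-1$ is an odd integer, then $k^2=m^2$ forces $k=\pm m$, which is odd, so we must avoid odd $k$; by~\eqref{eq:defn_modes} this means taking $\I_s=2\Z$, i.e.\ $s=4$, giving case~(ii). If $m\in 2\N$ is even, $k=\pm m$ is even, so we avoid even $k$ by taking $\I_s=2\Z+1$, i.e.\ $s=5$, giving case~(iii). In all three cases $\dist(\I_s,\{\pm m\})\geq 1$ in a suitable sense, so $|k^2-m^2|\geq c>0$ uniformly, and moreover $|k^2-m^2|\sim k^2+1$ as $|k|\to\infty$, which is what one needs for the decay bound on $\norm{\mathcal R_k}$.

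Third, for the quantitative verification of (A1): for the finitely many modes with $k^2-m^2<0$ (there are at most finitely many since $m$ is fixed), standard elliptic estimates give $\mathcal R_k=(-\Delta+(m^2-k^2))^{-1}:L^{q'}(\R^N)\to L^q(\R^N)$ bounded, using Sobolev embedding together with $2<q<2N/(N-2)$; symmetry is clear since the operator is self-adjoint and the kernel is symmetric. For the modes with $k^2-m^2>0$, set $\kappa_k:=\sqrt{k^2-m^2}$ and apply Lemma~\ref{lem_LAP} with frequency $\kappa_k$: since $q$ lies in the admissible range $\frac{2(N+1)}{N-1}\le q<\frac{2N}{N-2}$ (and in $N=2$, $6\le q<\infty$), we get $\norm{\mathcal R_k f}_q\le C\kappa_k^{-2+\frac N{q'}-\frac Nq}\norm{f}_{q'}$, and since $\kappa_k^2=k^2-m^2\sim k^2+1$ with $\kappa_k$ bounded below, this yields $\norm{\mathcal R_k}\le C(k^2+1)^{-\alpha/2}$ with $\alpha=2-\frac N{q'}+\frac Nq$; the condition $q<\frac{2Np}{(N-1)p-2}$ gives $\alpha>1-\frac2p$ exactly as in the proof of Corollary~\ref{cor_wave}. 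The distributional right-inverse identity and $\mathcal R_k=\mathcal R_{-k}$ hold because $\mathcal R_k$ depends only on $k^2$. For (A2), since $Q>0$ with $Q\in L^{q/(q-p)}(\R^N)$ has the required decay, the Birman--Schwinger operators $\mathcal R_k^Q=Q^{1/p}\mathcal R_k Q^{1/p}[\cdot]$ are compact $L^{p'}\to L^p$ by the same argument cited in Corollary~\ref{cor_wave} (\cite[Lemma~4.1]{EvequozWeth} for $N\ge3$, \cite[Section~3]{Evequoz_plane} for $N=2$), which adapts verbatim since shifting the frequency does not affect the compactness mechanism. For (A3), since $Q>0$ everywhere and $\L=-\Delta+m^2$ is uniformly elliptic, the remark following (A3) in the introduction applies: choosing $\phi_k=\phi(t_k\cdot)$ with $t_k$ large makes $\int_{\R^N}\phi_k(\L-k^2)\phi_k\dx = \int_{\R^N}(|\nabla\phi_k|^2+(m^2-k^2)\phi_k^2)\dx>0$, since the gradient term scales like $t_k^{2-N}\cdot t_k^2$... more carefully, after rescaling $\int |\nabla\phi_k|^2 = t_k^{2}\cdot t_k^{-N}\int|\nabla\phi|^2$ dominates $t_k^{-N}|m^2-k^2|\int\phi^2$ for $t_k$ large when $N\le 2$, and for $N\ge 3$ one instead uses that the Dirichlet integral controls the lower-order term by a Hardy/Sobolev inequality for appropriately chosen $\phi$; in any case this is the standard positivity argument and (A3) follows.

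The main obstacle, as in Corollary~\ref{cor_wave}, is not any single step but ensuring that the admissible range of $q$ forced by the two competing constraints --- the Kenig--Ruiz--Sogge range $\frac{2(N+1)}{N-1}\le q<\frac{2N}{N-2}$ for boundedness of $\mathcal R_k$, and $\alpha=2-\frac N{q'}+\frac Nq>1-\frac2p$ for the summability of the Fourier series --- is non-empty and compatible with $2<p<\frac{2(N+1)}{N-1}$; but this is exactly the computation already carried out for the wave equation, and the only genuinely new ingredient here is the bookkeeping of the finitely many ``elliptic'' modes $|k|<m$ and the correct case distinction on $m$ to exclude $k^2=m^2$. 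Once those are in place, Theorem~\ref{thm_general} applies directly and produces the unbounded sequence $U_j\in L^q(\R^N;L^p_s(\T))$. \qed
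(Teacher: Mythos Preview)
Your approach is correct and essentially identical to the paper's, which omits the proof with the remark that it proceeds ``very much the same as for the wave equation'' and then points out precisely the two ingredients you identify: the case distinction on $m$ to avoid $k^2=m^2$, and the fact that the finitely many ``elliptic'' modes $|k|<m$ have classical Bessel-potential inverses with even better mapping properties than the Helmholtz-type ones.

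One small comment on your (A3) sketch: the scaling you wrote down actually works in \emph{all} dimensions without the dichotomy you tentatively introduce. With $\phi_k=\phi(t_k\cdot)$ one has
\[
\int_{\R^N} |\nabla\phi_k|^2\dx = t_k^{2-N}\int_{\R^N}|\nabla\phi|^2\dx,
\qquad
\int_{\R^N} \phi_k^2\dx = t_k^{-N}\int_{\R^N}\phi^2\dx,
\]
so the ratio of gradient term to mass term is $t_k^2$, and for $t_k$ large the quadratic form $\int(|\nabla\phi_k|^2+(m^2-k^2)\phi_k^2)\dx$ is positive regardless of $N$. No Hardy or Sobolev inequality is needed here; this is exactly the ellipticity argument from the Introduction (and the one carried out in detail in the proof of Corollary~\ref{cor:CossettiMandel}).
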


Since the proof is very much the same as for the wave equation, we omit it.  Let us
remark that our choice of $s$ again ensures that we avoid the modes $k\in\I_s$ with $m^2-k^2=0$. In the study
of the operators $- \Delta + m^2 - k^2$, there may now occur a finite number of operators ($k \in \I_s$ with $k^2 < m^2$)
with  classical $L^2$-inverses given by a
convolution with positive exponentially decaying kernels, see (2.21) in~\cite{Kenig}. The mapping
properties of these well-understood Bessel potential operators are in fact
much better than the ones mentioned in Lemma~\ref{lem_LAP} because all $r\in [2,\frac{2N}{N-2}]$ resp.
$r\in [2,\infty)$ are allowed in~\eqref{eq:KRS} if $N\geq 3$ resp. $N=2$.
Moreover, $-\Delta + m^2-k^2$ is uniformly elliptic so that the arguments presented in
the  Introduction imply the validity of (A2) and (A3) under the assumption $Q>0$. 

\subsection{Breather Solutions for Fractional and Biharmonic Wave Equations}

We consider the problem
\begin{equation}\label{eq_wavefrac}
	\partial_{tt}  U  + (- \Delta)^\gamma U = Q(x) |U|^{p-2} U
	\qquad \text{on } \T \times \R^N
\end{equation}
for general $\gamma>\frac{N}{N+1}$.
As in the case of the classical wave equation one finds distributional right  inverses of $(-\Delta)^\gamma-k^2$ with
the aid of the Limiting Absorption Principle that allows to make sense of the limits
$$
  \mathcal R_k^\gamma f:= \lim_{\eps\to 0^+}
  \Real\left[\cF^{-1}\left(\frac{\cF f}{|\cdot|^{2\gamma}-k^2-i\eps}
  \right)\right].
$$
This follows from a result by Huang, Yao, Zheng~\cite[Corollary~3.2]{Huang_LAP}.

\begin{lem}[Huang, Yao, Zheng]\label{lem_LAP_fractional}
  Let $N\in\N,N\geq 3$ and assume $\frac{2(N+1)}{N-1}\leq r<\frac{2N}{(N-2\gamma)_+}$, $\gamma>
  \frac{N}{N+1}$.
  For every $k\in\Z\sm\{0\}$ the  operator  $\mathcal{R}_k^\gamma:L^{r'}(\R^N)\to L^r(\R^N)$ is a
   bounded and symmetric distributional  right  inverse of $(-\Delta)^\gamma-k^2$ satisfying
  $$
    \|\mathcal R_k^\gamma f\|_r \leq |k|^{-2+\frac{N}{\gamma r'}-\frac{N}{\gamma r}}\|f\|_{r'}.
  $$
\end{lem}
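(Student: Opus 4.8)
The plan is to reduce Lemma~\ref{lem_LAP_fractional} to the cited resolvent estimate of Huang, Yao and Zheng~\cite[Corollary~3.2]{Huang_LAP} for the fractional Laplacian $(-\Delta)^\gamma$, using a scaling argument to track the dependence on the spectral parameter $k$, and then to verify the three claimed properties (well-definedness as a bounded operator $L^{r'}\to L^r$, symmetry, and the distributional right-inverse property) one at a time. First I would record the Limiting Absorption Principle in the form supplied by~\cite{Huang_LAP}: for a fixed nonzero value of the spectral parameter, say $z=1$, the operators $((-\Delta)^\gamma - 1 \mp i\eps)^{-1}$ converge as $\eps\to 0^+$ in the operator norm from $L^{r'}(\R^N)$ to $L^r(\R^N)$ in the admissible range $\frac{2(N+1)}{N-1}\leq r<\frac{2N}{(N-2\gamma)_+}$, and that the limit satisfies a uniform bound $\|(\,\cdot\,)\|_{L^{r'}\to L^r}\leq C$. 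Taking real parts, the operator $\mathcal R_1^\gamma f=\Real\,\cF^{-1}\big((|\cdot|^{2\gamma}-1-i0)^{-1}\cF f\big)$ is then a bounded operator $L^{r'}(\R^N)\to L^r(\R^N)$.

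Next I would run the scaling. For $k\in\Z\sm\{0\}$ set $\lambda=|k|^{1/\gamma}>0$ and $f_\lambda(x):=f(\lambda x)$; since $(-\Delta)^\gamma$ is homogeneous of degree $2\gamma$ under dilations, one has the intertwining relation $\big((-\Delta)^\gamma-k^2\mp i\eps\big)^{-1} f = \lambda^{-2\gamma}\,\big[\big((-\Delta)^\gamma-1\mp i\eps\lambda^{-2\gamma}\big)^{-1} f_\lambda\big](\cdot/\lambda)$, which in the limit $\eps\to 0^+$ gives $\mathcal R_k^\gamma f = \lambda^{-2\gamma}\big(\mathcal R_1^\gamma f_\lambda\big)(\cdot/\lambda)$. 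Computing the $L^r$ norm of the right-hand side and using $\|f_\lambda\|_{r'}=\lambda^{-N/r'}\|f\|_{r'}$ together with the change of variables $x\mapsto\lambda x$ in the $L^r$ integral yields
$$
  \|\mathcal R_k^\gamma f\|_r
  = \lambda^{-2\gamma}\lambda^{N/r}\|\mathcal R_1^\gamma f_\lambda\|_r
  \leq C\,\lambda^{-2\gamma+\frac{N}{r}}\lambda^{-\frac{N}{r'}}\|f\|_{r'}
  = C\,|k|^{-2+\frac{N}{\gamma r}-\frac{N}{\gamma r'}}\|f\|_{r'},
$$
which, after tracking the constant (which can be normalised to $1$ by the statement's convention, or simply absorbed), is the asserted estimate. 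The symmetry of $\mathcal R_k^\gamma$ follows from the symmetry of each approximating resolvent on the Fourier side — the multiplier $(|\xi|^{2\gamma}-k^2-i\eps)^{-1}$ is even and the operation of taking real parts commutes with the $L^2$ pairing on real-valued functions — so $\int \mathcal R_k^\gamma f\cdot g = \int f\cdot \mathcal R_k^\gamma g$ first for Schwartz functions and then by density and boundedness for all $f,g\in L^{r'}(\R^N)$.

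Finally, for the distributional right-inverse identity $\int_{\R^N}\mathcal R_k^\gamma f\cdot((-\Delta)^\gamma-k^2)\phi = \int_{\R^N} f\phi$ for $f\in L^{r'}(\R^N)$, $\phi\in C_c^\infty(\R^N)$, I would argue by approximation: for the regularised operators one has $\big((-\Delta)^\gamma-k^2\mp i\eps\big)\big((-\Delta)^\gamma-k^2\mp i\eps\big)^{-1}f = f$ as an $L^{r'}$-identity, hence testing against $\phi$ and using self-adjointness of $(-\Delta)^\gamma$ (realised via $\cF$) moves the operator onto $\phi$; passing $\eps\to 0^+$ is justified because the resolvents converge in $\mathcal B(L^{r'},L^r)$ while $((-\Delta)^\gamma-k^2\mp i\eps)\phi\to((-\Delta)^\gamma-k^2)\phi$ in $L^{r'}$ (indeed in every $L^s$, since $\phi$ is Schwartz and $(-\Delta)^\gamma\phi\in L^s$), and taking real parts is harmless on the real-valued test function $\phi$. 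I expect the main obstacle to be purely bookkeeping rather than conceptual: one must make sure the admissible exponent range $\frac{2(N+1)}{N-1}\leq r<\frac{2N}{(N-2\gamma)_+}$ and the hypothesis $\gamma>\frac{N}{N+1}$ are exactly those under which~\cite[Corollary~3.2]{Huang_LAP} delivers a \emph{uniform} (in $\eps$) bound and norm-convergence of the limiting-absorption resolvents — and that the real-part operation, which is what makes $\mathcal R_k^\gamma$ map real functions to real functions, does not spoil boundedness (it does not, since $\Real$ is bounded on $L^r$). With those points in place the lemma follows; since all of this is standard and already contained in the quoted references, the paper rightly omits a detailed proof.
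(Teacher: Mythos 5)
The paper itself gives no proof here: Lemma~\ref{lem_LAP_fractional} is simply quoted from Huang--Yao--Zheng, so your task was really to explain why it follows from the cited limiting--absorption estimate. Your overall strategy (take the fixed--energy bound at $z=1$ and obtain the $k$--dependence by rescaling) is the right one and is indeed standard; the symmetry and right--inverse arguments are also fine in outline.

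However, the scaling identity you write down is wrong, and the error propagates into the final exponent. With $\lambda=|k|^{1/\gamma}$ the correct intertwining relation is
$$
  \mathcal R_k^\gamma f \;=\; \lambda^{-2\gamma}\,\bigl(\mathcal R_1^\gamma\bigl[f(\cdot/\lambda)\bigr]\bigr)(\lambda\,\cdot),
$$
which you can verify by applying $(-\Delta)^\gamma-k^2$ to the right--hand side and using that $(-\Delta)^\gamma[H(\lambda\,\cdot)]=\lambda^{2\gamma}[(-\Delta)^\gamma H](\lambda\,\cdot)$, or by a direct Fourier--side computation. You instead wrote $\mathcal R_k^\gamma f=\lambda^{-2\gamma}\bigl(\mathcal R_1^\gamma[f(\lambda\,\cdot)]\bigr)(\cdot/\lambda)$, with both dilations pointing the wrong way; on the Fourier side your expression has multiplier $(\lambda^{4\gamma}|\xi|^{2\gamma}-\lambda^{2\gamma}-i0)^{-1}$, which is \emph{not} $(|\xi|^{2\gamma}-k^2-i0)^{-1}$. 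Running the (incorrect) identity you obtain the exponent $-2+\frac{N}{\gamma r}-\frac{N}{\gamma r'}$, and then assert this ``is the asserted estimate''. It is not: the lemma claims $-2+\frac{N}{\gamma r'}-\frac{N}{\gamma r}$, and since $r>r'$ on the admissible range the two exponents differ by the strictly positive quantity $2\bigl(\frac{N}{\gamma r'}-\frac{N}{\gamma r}\bigr)$. This matters quantitatively, because the decay rate $\alpha=2-\frac{N}{\gamma r'}+\frac{N}{\gamma r}$ is exactly what feeds into assumption (A1) and thus controls the convergence of the Fourier series; your exponent would suggest a much faster (and false) decay. The correct scaling yields
$$
  \|\mathcal R_k^\gamma f\|_r
  = \lambda^{-2\gamma}\lambda^{-N/r}\,\bigl\|\mathcal R_1^\gamma[f(\cdot/\lambda)]\bigr\|_r
  \le C\,\lambda^{-2\gamma-\frac{N}{r}+\frac{N}{r'}}\,\|f\|_{r'}
  = C\,|k|^{-2+\frac{N}{\gamma r'}-\frac{N}{\gamma r}}\,\|f\|_{r'},
$$
as stated. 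Once this sign error is fixed, the rest of your argument is sound, and is consistent with the classical case $\gamma=1$ in Lemma~\ref{lem_LAP}.
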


The previous lemma provides the existence of distributional right  inverses for all $\gamma>
\frac{N}{N+1}$. Notice that this restriction on $\gamma$ is needed to ensure
$\frac{2(N+1)}{N-1}<\frac{2N}{(N-2\gamma)_+}.$ 
As in the case of the Laplacian we expect a similar result to hold in the two-dimensional case $N=2$. In
the following result we apply Lemma~\ref{lem_LAP_fractional} in order to prove the existence of
breathers to fractional nonlinear wave equations just as in the case $\gamma=1$ discussed in
Corollary~\ref{cor_wave}. We stress that this includes the case $\gamma=2$ of biharmonic nonlinear wave
equations.

\begin{cor}[Fractional Wave Equations]\label{cor_wave_frac}
Assume $N\in\N,N\geq 3,\gamma>\frac{N}{N+1}$ and $Q\in L^\frac{q}{q-p}(\R^N),Q> 0$ where
$p,q$ satisfy
\begin{align*}
	&2 < p < \frac{2\gamma(N+1)}{((2-\gamma)N-\gamma)_+},
	\qquad
	 \frac{2(N+1)}{N-1} < q < \frac{2Np}{((N-\gamma)p - 2\gamma)_+}.
\end{align*}
Then, for $s\in\{3,5\}$, the nonlinear  fractional wave equation~\eqref{eq_wavefrac} admits an unbounded
sequence of $2\pi$-periodic real-valued  distributional breather solutions $U_j \in L^q(\R^N,L^p_s(\T)), \,
j \in \N_0$.
\end{cor}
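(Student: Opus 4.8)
The plan is to apply Theorem~\ref{thm_general} with $\L=(-\Delta)^\gamma$ and the operators $\mathcal R_k:=\mathcal R_k^\gamma$ from Lemma~\ref{lem_LAP_fractional}, following the scheme of the proof of Corollary~\ref{cor_wave}. Since $s\in\{3,5\}$ forces $\I_s\subseteq\Z\sm\{0\}$, every relevant mode $k\in\I_s$ has $|k|\geq1$, so Lemma~\ref{lem_LAP_fractional} is applicable to each of them and it remains to verify (A1)--(A3) for these $k$.

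For (A1) I would take $r=q$ in Lemma~\ref{lem_LAP_fractional}. An elementary manipulation shows that the stated conditions on $p,q$ are set up so that $\frac{2(N+1)}{N-1}\leq q<\frac{2N}{(N-2\gamma)_+}$ (for the upper bound one uses $\frac{2Np}{((N-\gamma)p-2\gamma)_+}\leq\frac{2N}{(N-2\gamma)_+}$, which holds because $p>2$) and $p\leq q$. The lemma then supplies real-valued (note the $\Real$ in the definition of $\mathcal R_k^\gamma$), bounded, symmetric distributional right inverses $\mathcal R_k$ of $(-\Delta)^\gamma-k^2$ with $\mathcal R_k=\mathcal R_{-k}$ and, for $|k|\geq1$,
\[
  \|\mathcal R_k f\|_q\leq|k|^{-\alpha}\|f\|_{q'}\leq C(k^2+1)^{-\alpha/2}\|f\|_{q'},
  \qquad \alpha:=2-\tfrac{N}{\gamma}\bigl(1-\tfrac{2}{q}\bigr).
\]
A further short computation shows that $q<\tfrac{2Np}{((N-\gamma)p-2\gamma)_+}$ is precisely equivalent to $\alpha>1-\tfrac2p$, which completes the verification of (A1).

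The bulk of the work is assumption (A2). Boundedness of $\mathcal R_k^Q:L^{p'}(\R^N)\to L^p(\R^N)$ is automatic from (A1), $Q\in L^{q/(q-p)}(\R^N)$ and H\"older's inequality, as already observed in the Introduction; the real issue is compactness, and here I would argue as in~\cite[Lemma~4.1]{EvequozWeth} or~\cite[Lemma~3.1]{GolSch}. One writes $Q=Q_1+Q_2$ with $Q_1:=Q\cdot 1_{B_R}\cdot 1_{Q\leq M}$ bounded and compactly supported and $\|Q_2\|_{q/(q-p)}$ as small as desired for $R,M$ large; the boundedness estimate applied to $Q_2$ then gives $\|\mathcal R_k^{Q_2}\|\leq C\|Q_2\|_{q/(q-p)}$, which is small, while for $Q_1$ one exploits the local regularizing properties of $\mathcal R_k^\gamma$ (coming from its convolution structure and the high-frequency decay $\sim|\xi|^{-2\gamma}$ of its symbol) together with the compact Rellich--Kondrachov embedding on $B_R$, using that $Q_1^{1/p}v$ is supported in $B_R$ to raise its integrability, to see that $\mathcal R_k^{Q_1}$ is compact. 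As a norm-limit of compact operators, $\mathcal R_k^Q$ is then compact, which is (A2). I expect this step to be the main obstacle, since the required local regularity of the fractional resolvent has to be read off from~\cite{Huang_LAP} and the splitting/limiting argument has to be set up with some care.

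For (A3) I would use the sufficient condition from the Introduction: as $Q>0$, it suffices to exhibit, for each $k\in\I_s$, a function $\phi_k\in C_c^\infty(\R^N)$ with $\int_{\R^N}\phi_k\,((-\Delta)^\gamma-k^2)\phi_k\dx>0$. Fixing $0\not\equiv\phi\in C_c^\infty(\R^N)$ and setting $\phi_k:=\phi(t_k\,\cdot)$, Plancherel's theorem and scaling give, for a constant $c>0$,
\[
  \int_{\R^N}\phi_k\,((-\Delta)^\gamma-k^2)\phi_k\dx
  = t_k^{-N}\Bigl(c\,t_k^{2\gamma}\!\int_{\R^N}|\xi|^{2\gamma}|\cF\phi|^2\dxi - k^2\!\int_{\R^N}\phi^2\dx\Bigr),
\]
which is positive once $t_k>0$ is large, because $\gamma>0$ and the first integral is strictly positive. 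Since moreover $((-\Delta)^\gamma-k^2)\phi_k\in L^{p'}(\R^N)$ — it is smooth and decays like $|x|^{-N-2\gamma}$ at infinity for non-integer $\gamma$, and is compactly supported for integer $\gamma$ — the function $\omega_k:=\omega_k^\delta:=Q^{-1/p}1_{Q>\delta}\,((-\Delta)^\gamma-k^2)\phi_k$ lies in $L^{p'}(\R^N)$, and the limiting argument carried out in the Introduction shows $\int_{\R^N}\omega_k\,\mathcal R_k^Q[\omega_k]\dx>0$ for $\delta>0$ small, i.e.\ (A3). With (A1)--(A3) established, Theorem~\ref{thm_general} yields the claimed unbounded sequence of $2\pi$-periodic real-valued distributional breather solutions in $L^q(\R^N,L^p_s(\T))$.
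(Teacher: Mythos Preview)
Your overall scheme---verify (A1)--(A3) via Lemma~\ref{lem_LAP_fractional} and invoke Theorem~\ref{thm_general}---is correct and matches the paper; your treatment of (A1) is identical to the paper's. Two points deserve comment.

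For (A3) you and the paper take genuinely different routes. You extend the Introduction's scaling argument $\phi_k=\phi(t_k\cdot)$ to the fractional Laplacian, correctly noting the extra issue that $(-\Delta)^\gamma\phi$ is not compactly supported for non-integer $\gamma$ and handling it via the $|x|^{-N-2\gamma}$ decay. The paper instead argues directly on the Fourier side (cf.~\cite[Lemma~3.1]{MaMoPe_Osc}): it chooses $\tilde\omega_k$ with $\supp\cF(\tilde\omega_k)\subset\{|\xi|^{2\gamma}>k^2\}$, so that by the very definition of $\mathcal R_k^\gamma$ and Plancherel one gets $\int\tilde\omega_k\,\mathcal R_k^\gamma[\tilde\omega_k]\dx=\int|\cF\tilde\omega_k|^2\,(|\xi|^{2\gamma}-k^2)^{-1}\dxi>0$ immediately, and then inserts $Q^{-1/p}1_{Q\geq\delta}$ as you do. Both work; the paper's version avoids the decay discussion and any differentiability considerations.

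For (A2) your sketch has a small slip and misses one technical ingredient that the paper supplies. The slip: writing $Q=Q_1+Q_2$ does \emph{not} give $\mathcal R_k^Q=\mathcal R_k^{Q_1}+\mathcal R_k^{Q_2}$, since $Q\mapsto Q^{1/p}$ is not linear; the paper (and the references you cite) instead approximates $\Gamma:=Q^{1/p}$ in $L^{pq/(q-p)}$ by bounded compactly supported $\Gamma_n$ and reduces to compactness of $v\mapsto\mathcal R_k^\gamma[\Gamma_n v]$ from $L^{p'}$ to $L^q$. In that reduction only the \emph{inner} factor is truncated, so the output is not localized and a tail estimate is needed. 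This is where the paper brings in a new ingredient you did not anticipate: the kernel representation $\mathcal R_k^\gamma f=G_k^\gamma\ast f$ and the pointwise bound $|G_k^\gamma(z)|\leq C_k|z|^{(1-N)/2}$ for $|z|\geq1$ from~\cite{Huang_LAP}, which together with $q>\frac{2N}{N-1}$ yields $\|\chi_{\R^N\setminus B}\mathcal R_k^\gamma[\Gamma_n\,\cdot\,]\|_{p'\to q}\to0$ as $B\nearrow\R^N$. The local part is then handled, as you say, by the fractional Rellich--Kondrachov theorem. (If you had set up the approximation so that the \emph{outer} factor is truncated as well, the output would be localized and the kernel-decay step could be bypassed; but that is not how the paper argues.)
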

\textbf{Proof of Corollary~\ref{cor_wave_frac}.}  As in the proof of Corollary~\ref{cor_wave} the previous
lemma yields assumption~(A1) for $\alpha=2-\frac{N}{\gamma q'}+\frac{N}{\gamma q}$.
 For the verification of (A3) we follow~\cite[Lemma~3.1]{MaMoPe_Osc}. We define
 $\omega_k^\delta:=Q^{-1/p}\tilde\omega_k 1_{Q\geq \delta}\in L^{p'}(\R^N)$ for $\delta>0$  and the support
 of $\cF(\tilde\omega_k)$ is contained in $\{\xi\in\R^N:|\xi|^{2\gamma}>k^2\}$. Then the above definition for $\mathcal R_k^\gamma$ implies
 \begin{align*}
    \lim_{\delta\to 0^+}
    \int_{\R^N} \omega_k^\delta Q^{1/p}\mathcal R_k^\gamma[Q^{1/p}\omega_k^\delta] \dx
    &= \int_{\R^N} \tilde \omega_k \mathcal R_k^\gamma[ \tilde \omega_k] \dx \\
    &= \lim_{\eps\to 0} \Real\left(\int_{\R^N}
     \frac{|\cF(\tilde \omega_k)|^2}{|\xi|^{2\gamma}-k^2-i\eps} \dxi \right) \\
    &=  \int_{\R^N} \frac{|\cF(\tilde \omega_k)|^2}{|\xi|^{2\gamma}-k^2} \dxi
    > 0.
 \end{align*}
  So choosing $\delta>0$ sufficiently small and $\omega_k=\omega_k^\delta\in L^{p'}(\R^N)$  yields (A3).

  \medskip

  The verification of (A2) is standard for classical
  Schr\"odinger operators of second order. In order to see that in the fractional case nothing really changes,
  we repeat the main arguments here. In view of
  $$
    \|Q^{1/p}\mathcal R_k^\gamma[Q^{1/p}v]\|_p
    \leq \|Q\|_{\frac{q}{q-p}}^{1/p} \|\mathcal R_k^\gamma[Q^{1/p}v]\|_q
  $$
  it suffices to prove that $v\mapsto \mathcal R_k^\gamma[\Gamma v]$ is compact from $L^{p'}(\R^N)$ to
  $L^q(\R^N)$ where $\Gamma:=Q^{1/p}$. We may without loss of generality assume that $\Gamma$ is bounded with
  compact support.  Indeed, choosing $\Gamma_n\to \Gamma$ in $L^{\frac{pq}{q-p}}(\R^N)$ with $\Gamma_n$
  bounded and compact support, we find
  $$
    \|\mathcal R_k^\gamma[\Gamma v] - \mathcal R_k^\gamma[\Gamma_nv]\|_q
    \leq \|\mathcal R_k^\gamma[(\Gamma-\Gamma_n)v]\|_q
    \leq \|\mathcal R_k^\gamma\|_{q'\to q} \underbrace{\|\Gamma-\Gamma_n\|_{\frac{pq}{q-p}}}_{\to 0
    \text{ as }n\to\infty} \|v\|_{p'}.
  $$
  Having proved that  $v\mapsto \mathcal R_k^\gamma[\Gamma_n v]$ is compact for each $n\in\N$ we can thus
  conclude that $v\mapsto \mathcal R_k^\gamma[\Gamma v]$ is compact as the limit of compact operators with
  respect to the uniform operator topology. So it remains to prove the compactness of
  $v\mapsto \mathcal R_k^\gamma[\Gamma v]$ from $L^{p'}(\R^N)$ to $L^q(\R^N)$ assuming that $\Gamma$ is
  bounded with compact support.

  \medskip

  Let $B\subset\R^N$ be any bounded ball. The compactness of $v\mapsto \chi_B \mathcal R_k^\gamma[\Gamma v]$
  follows from the fractional Rellich-Kondrachov Theorem, see~\cite[Corollary~7.2]{Hitchhiker}.
  By the same argument as above, it remains to show $\|\chi_{\R^N\setminus B}\mathcal R_k^\gamma[\Gamma
  \cdot]\|_{p'\to q} \to 0$ as $B\nearrow \R^n.$ To this end we use
  \begin{equation*}
    \mathcal R_k^\gamma f   = G_k^\gamma\ast f,\qquad\text{where }
     G_k^\gamma(z)
     := \lim_{\eps\to 0^+} \Real\left[\cF^{-1}\left(\frac{1}{|\cdot|^{2\gamma}-k^2-i\eps} \right)(z) \right].
  \end{equation*}
  The formulas (3.8),(3.8') in~\cite[Corollary~3.2]{Huang_LAP} show that the kernel function
  satisfies
    $|G_k^\gamma(z)|\leq C_k|z|^{\frac{1-N}{2}} \quad\text{if }|z|\geq 1$
  for some $C_k>0$. Hence, for $M:= \supp(\Gamma)$ and $x\in\R^N$ such that
  $\dist(x,M)\geq 1$ we have
\begin{equation*}
	|\mathcal R_k^\gamma[\Gamma v](x)|
	\leq C_k \int_M |x-y|^\frac{1-N}{2}|\Gamma(y)| |v(y)|\, \mathrm{d}y
	\leq \tilde C_k |x|^\frac{1-N}{2} \|\Gamma\|_p \|v\|_{p'}.
\end{equation*}
  This yields for large enough balls $B$
\begin{equation*}
	\|\chi_{\R^N\setminus B} \mathcal R_k^\gamma[\Gamma v]\|_{q}
	\leq C \|\Gamma\|_{p} \|v\|_{p'} \Big(\int_{\R^N \setminus B} |x|^{\frac{q(1-N)}{2}}\dx \Big)^\frac{1}{q}
\end{equation*}
and the conclusion follows due to $q>\frac{2(N+1)}{N-1}>\frac{2N}{N-1}$.
\hfill $\square$

\subsection{Breather Solutions for the perturbed Wave Equation}

We consider
\begin{equation}\label{eq_perturbedwave}
	\partial_{tt}  U - \Delta U + V(x) U = Q(x) |U|^{p-2} U
	\qquad \text{on } \T \times \R^N
\end{equation}
where now $V$ is a short-range potential. In \cite[Theorem~4]{CosMan} 
the following generalization of Lemma~\ref{lem_LAP} is proved.

\begin{lem}[Cossetti, Mandel] \label{lem_LAP_CosMan}
  Let $N\in\N,N\geq 3$ and assume $V\in L^{\frac{N}{2}}(\R^N)+L^{\frac{N+1}{2}}(\R^N)$
  and $\frac{2(N+1)}{N-1}\leq r\leq \frac{2N}{N-2}$. For every $k\in\Z\sm\{0\}$ there is a
  bounded and symmetric distributional right  inverse
    $\mathcal{R}_k:L^{r'}(\R^N)\to L^r(\R^N)$ of $-\Delta+V(x)-k^2$ satisfying
  $$
    \|\mathcal R_k f\|_r \leq |k|^{-2+\frac{N}{r'}-\frac{N}{r}}\|f\|_{r'}.
  $$
\end{lem}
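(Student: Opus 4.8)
The plan is to construct $\mathcal R_k$ out of the free operator $\mathcal R_k^{0}$ (the operator $\mathcal R_k$ of Lemma~\ref{lem_LAP}, i.e.\ the case $V\equiv 0$) by a Birman--Schwinger perturbation that installs the potential. Recall that, by Lemma~\ref{lem_LAP} and its non-conjugate companions of Kenig--Ruiz--Sogge / Guti\'errez type, one has the uniform bounds $\|\mathcal R_k^{0} f\|_{L^a(\R^N)}\le C\,|k|^{N(1/b-1/a)-2}\|f\|_{L^b(\R^N)}$ whenever $(1/b,1/a)$ lies in the admissible region, in particular whenever $\tfrac2{N+1}\le\tfrac1b-\tfrac1a\le\tfrac2N$ and the exponents are not too extreme; for conjugate pairs $(b,a)=(r',r)$ with $r$ in the range of the lemma this is precisely $|k|^{-2+N/r'-N/r}$. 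Now split $V=V_1+V_2$ with $V_1\in L^{N/2}(\R^N)$ and $V_2\in L^{(N+1)/2}(\R^N)$: these are exactly the critical exponents attached to the two endpoints $\rho_1=\tfrac{2N}{N-2}$ and $\rho_2=\tfrac{2(N+1)}{N-1}$ of the admissible range, so that $a_j:=|V_j|^{1/2}$ and $b_j:=|V_j|^{1/2}\operatorname{sgn}V_j$ (with $V_j=a_jb_j$) lie in $L^N(\R^N)$ resp.\ $L^{N+1}(\R^N)$ --- exactly the multiplication operators that shuttle between $L^2(\R^N)$ and the endpoint spaces $L^{\rho_j},L^{\rho_j'}$ on which $\mathcal R_k^{0}$ is well behaved. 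One is then led to the ansatz
\[
  \mathcal R_k f := \mathcal R_k^{0} f-\mathcal R_k^{0}\,\mathbf b^{\!\top}\big(\mathrm{Id}+K_k\big)^{-1}\mathbf a\,\mathcal R_k^{0} f,
  \qquad K_k:=\big(a_i\,\mathcal R_k^{0}\,b_j\big)_{i,j=1,2}\ \text{ on }\ L^2(\R^N)^2,
\]
where $\mathbf a\,g:=(a_1g,a_2g)$ and $\mathbf b^{\!\top}(\psi_1,\psi_2):=b_1\psi_1+b_2\psi_2$; formally this inverts $-\Delta+V-k^2$ since $\mathcal R_k^{0}$ inverts $-\Delta-k^2$ and $V=\mathbf b^{\!\top}\mathbf a$.

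First I would check the mapping properties by chaining H\"older's inequality with the free estimates: multiplication by $b_j$ sends $L^2(\R^N)$ into $L^{\rho_j'}(\R^N)$, $\mathcal R_k^{0}$ sends $L^{\rho_j'}(\R^N)$ into any admissible $L^{\rho_i}(\R^N)$ with a power of $|k|$ read off from the exponent identity, and multiplication by $a_i$ sends $L^{\rho_i}(\R^N)$ back into $L^2(\R^N)$. Hence each block of $K_k$ is bounded on $L^2(\R^N)$, with $\|a_1\mathcal R_k^{0} b_1\|_{L^2\to L^2}\le C\|V_1\|_{L^{N/2}}$ (no decay in $k$) and all blocks involving $V_2$ of size $O(|k|^{-1/(N+1)})$. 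The same chain with $f\in L^{r'}(\R^N)$ on the input side shows that $\mathbf a\,\mathcal R_k^{0}\colon L^{r'}(\R^N)\to L^2(\R^N)^2$ and $\mathcal R_k^{0}\,\mathbf b^{\!\top}\colon L^2(\R^N)^2\to L^r(\R^N)$ are bounded, and a short bookkeeping of the $|k|$-powers of the two ``legs'' (which add up to the full exponent $-2+N/r'-N/r$) yields that the correction term obeys $\lesssim|k|^{-2+N/r'-N/r}\|f\|_{r'}$ as well; together with the free term this is the asserted bound. Symmetry of $\mathcal R_k$ follows from the symmetry of $\mathcal R_k^{0}$ and the formal self-adjointness of $-\Delta+V-k^2$ (the factorized identity is invariant under transposition because $V$ is real and $(\mathcal R_k^{0})^{\!\top}=\mathcal R_k^{0}$), and the distributional right-inverse property $\int_{\R^N}\mathcal R_k f\cdot(-\Delta+V-k^2)\phi\,\dx=\int_{\R^N} f\phi\,\dx$ for $\phi\in C_c^\infty(\R^N)$ follows directly from $(-\Delta-k^2)\mathcal R_k^{0}=\mathrm{Id}$ in $\mathcal D'(\R^N)$.

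The heart of the matter --- and the step I expect to be the main obstacle --- is the invertibility of $\mathrm{Id}+K_k$ on $L^2(\R^N)^2$ \emph{uniformly} in $k\in\Z\sm\{0\}$, which is what makes both the construction and the $|k|$-bound go through. To obtain it I would decompose $V=W+Z$ with $\|W\|_{L^{N/2}(\R^N)+L^{(N+1)/2}(\R^N)}$ as small as we please and $Z$ bounded with compact support, and split $K_k=K_k^W+K_k^Z$ accordingly. For $W$ small enough, $\|K_k^W\|_{L^2(\R^N)^2\to L^2(\R^N)^2}<\tfrac12$ uniformly in $k$, so $\mathrm{Id}+K_k^W$ is invertible with a uniformly bounded inverse; meanwhile $K_k^Z$ is \emph{compact} for every $k$ (approximate by truncations and use the Rellich--Kondrachov theorem together with the local smoothing of $\mathcal R_k^{0}$), and its norm tends to $0$ as $|k|\to\infty$ because the compact support of $Z$ lets one avoid the endpoint exponents and pick up a negative power of $|k|$. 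Writing $\mathrm{Id}+K_k=(\mathrm{Id}+K_k^W)\big(\mathrm{Id}+(\mathrm{Id}+K_k^W)^{-1}K_k^Z\big)$, the second factor is invertible by a Neumann series with uniform bounds once $|k|\ge M$, while for the finitely many remaining $k$ with $1\le|k|<M$ it is the identity plus a compact operator, hence Fredholm of index zero, so it is invertible as soon as it is injective. Injectivity here is exactly the statement that $-\Delta+V$ has neither an eigenvalue nor a resonance at the positive energy $k^2$: a nontrivial $\psi\in\ker(\mathrm{Id}+K_k)$ would produce, via the Birman--Schwinger correspondence, a nonzero $u=-\mathcal R_k^{0}\mathbf b^{\!\top}\psi$ with $\mathbf a u=\psi$, solving $(-\Delta+V-k^2)u=0$ in the space $L^{\rho_1}(\R^N)+L^{\rho_2}(\R^N)$, and such solutions are excluded for potentials in $L^{N/2}(\R^N)+L^{(N+1)/2}(\R^N)$ by the Carleman-estimate and unique-continuation results of Kato, Ionescu--Jerison and Koch--Tataru combined with the uniform Sobolev bounds --- which is precisely why this potential class (and this range of $r$) is the natural one. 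Feeding the resulting uniform bound on $(\mathrm{Id}+K_k)^{-1}$ back into the estimates of the previous paragraph finishes the proof.
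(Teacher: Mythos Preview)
The paper does not prove this lemma; it is simply quoted from \cite[Theorem~4]{CosMan} without argument, so there is no ``paper's own proof'' to compare against. Your Birman--Schwinger/resolvent-identity sketch is the standard route to such results and is essentially the approach behind the cited theorem: build $\mathcal R_k$ from the free $\mathcal R_k^{0}$ via the factorization $V=\mathbf b^{\!\top}\mathbf a$, reduce to the invertibility of $\mathrm{Id}+K_k$ on $L^2(\R^N)^2$, get uniform bounds for large $|k|$ by smallness, and for the finitely many remaining $k$ use Fredholm theory together with the absence of embedded eigenvalues/resonances (Ionescu--Jerison, Koch--Tataru) for potentials in $L^{N/2}+L^{(N+1)/2}$.

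Two minor remarks. First, as you note, the bounds on the legs $\mathbf a\,\mathcal R_k^{0}$ and $\mathcal R_k^{0}\,\mathbf b^{\!\top}$ require the \emph{non-conjugate} free resolvent estimates of Guti\'errez, not just the conjugate-pair estimate stated in Lemma~\ref{lem_LAP}; make sure you check that each pair $(r',\rho_i)$ and $(\rho_j',r)$ actually lies in the admissible region for those estimates (it does for $r$ in the stated range). Second, the decay exponent of the pure $V_2$-block $a_2\mathcal R_k^{0}b_2$ is $-2/(N+1)$; your $-1/(N+1)$ is the exponent of the mixed blocks. Neither point affects the validity of the argument.
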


  \begin{cor} \label{cor:CossettiMandel}
    Assume $N\in\N,N\geq 2$, $V$ as in Lemma~\ref{lem_LAP_CosMan} and $Q\in L^\frac{q}{q-p}(\R^N),Q>0$
    where $p,q$ satisfy
\begin{align*}
	&2 < p < \frac{2(N+1)}{N-1},
	\qquad
	 \frac{2(N+1)}{N-1} < q < \frac{2Np}{(N-1)p - 2}.
\end{align*}
Then, for $s\in\{3,5\}$, the perturbed nonlinear  wave equation~\eqref{eq_perturbedwave} has an
unbounded sequence of $2\pi$-periodic real-valued  distributional breather solutions  $U_j \in
L^q(\R^N;L^p_s(\T)), \, j \in \N_0$.
  \end{cor}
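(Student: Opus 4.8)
\textbf{Proof proposal for Corollary~\ref{cor:CossettiMandel}.}
The plan is to apply Theorem~\ref{thm_general} with $\L = -\Delta + V(x)$, verifying (A1)--(A3) for the operators furnished by Lemma~\ref{lem_LAP_CosMan}, essentially along the lines of the proof of Corollary~\ref{cor_wave}. Since $s \in \{3,5\}$ gives $\I_s \subseteq \Z \setminus \{0\}$, every mode $k \in \I_s$ satisfies $|k| \geq 1$, so Lemma~\ref{lem_LAP_CosMan} applies with $r = q$ once one observes that the hypotheses on $p,q$ force $\frac{2(N+1)}{N-1} \leq q < \frac{2N}{N-2}$ (the upper bound precisely because $p > 2$ yields $\frac{2Np}{(N-1)p-2} \leq \frac{2N}{N-2}$). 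The lemma then provides real-valued, bounded, symmetric distributional right inverses $\mathcal{R}_k$ of $-\Delta + V - k^2$; one has $\mathcal{R}_k = \mathcal{R}_{-k}$ since the operator depends on $k$ only through $k^2$, and $\|\mathcal{R}_k f\|_q \leq |k|^{-\alpha}\|f\|_{q'}$ with $\alpha = 2 - \frac{N}{q'} + \frac{N}{q}$, which combined with $|k| \geq 1$ gives $\|\mathcal{R}_k\| \leq C(k^2+1)^{-\alpha/2}$. Finally, the assumption $q < \frac{2Np}{(N-1)p-2}$ is exactly equivalent to $\alpha > 1 - \frac{2}{p}$, so (A1) holds. (For $N \geq 3$ this is Lemma~\ref{lem_LAP_CosMan}; for $N = 2$ one argues identically, using a planar limiting absorption principle for $-\Delta + V$ in place of Lemma~\ref{lem_LAP_CosMan}, as in Corollary~\ref{cor_wave}.)

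For (A2) I would follow the proof of Corollary~\ref{cor_wave_frac} nearly verbatim: since $Q \in L^{q/(q-p)}(\R^N)$, it suffices that $v \mapsto \mathcal{R}_k[\Gamma v]$ be compact from $L^{p'}(\R^N)$ to $L^q(\R^N)$ with $\Gamma = Q^{1/p}$, and by density one may take $\Gamma$ bounded with compact support. The restriction to any fixed ball $B$ is then compact by the Rellich--Kondrachov theorem together with interior elliptic regularity for $-\Delta + V$ (admissible because $V \in L^{N/2} + L^{(N+1)/2} \subseteq L^1_{\mathrm{loc}}$), while on the complement of a large ball the operator norm tends to $0$ thanks to $q > \frac{2N}{N-1}$, just as before. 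The one genuinely new point, compared with the unperturbed equation, is that the perturbed resolvent is no longer a convolution, so the required decay of $\mathcal{R}_k[\Gamma v]$ away from $\supp \Gamma$ must be extracted from the resolvent identity $\mathcal{R}_k = \mathcal{R}_k^0 - \mathcal{R}_k^0 V \mathcal{R}_k$ (with $\mathcal{R}_k^0$ the free Helmholtz resolvent, whose kernel decays like $|z|^{(1-N)/2}$ at infinity) combined with the $L^{q'} \to L^q$ bound of Lemma~\ref{lem_LAP_CosMan}. I expect this transfer of spatial decay to the perturbed resolvent to be the main obstacle in the deduction; all the needed ingredients are, however, already contained in \cite{CosMan}.

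Finally, (A3) follows from the general criterion recalled in the Introduction: $-\Delta + V - k^2$ is uniformly elliptic with locally integrable coefficients, so it is enough to produce, for each $k \in \I_s$, a test function $\phi_k \in C_c^\infty(\R^N)$ with $\int_{\R^N} \phi_k(-\Delta + V - k^2)\phi_k \dx > 0$; then (A3) holds with $\omega_k := Q^{-1/p} 1_{Q > \delta}(-\Delta + V - k^2)\phi_k$ for $\delta > 0$ small. Choosing $\phi_k = \phi(t_k \cdot)$ for a fixed $\phi \in C_c^\infty(\R^N)$ and $t_k$ large, the Dirichlet term $\int |\nabla \phi_k|^2 \dx = t_k^{2-N}\|\nabla\phi\|_2^2$ dominates $k^2 \int \phi_k^2 \dx = k^2 t_k^{-N}\|\phi\|_2^2$ and the contribution of the $L^{(N+1)/2}$-part of $V$, both of which scale with strictly smaller powers of $t_k$. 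The only delicate scaling is that of the $L^{N/2}$-part, which produces a term of the same order $t_k^{2-N}$; to absorb it one first splits $V = V_1 + V_2$ with $\|V_1\|_{N/2}$ as small as desired and $V_2 \in L^{(N+1)/2}$ (moving a bounded, compactly supported piece of the $L^{N/2}$-component into $V_2$), and then uses Sobolev's inequality $\|\phi\|_{2N/(N-2)}^2 \leq C_N \|\nabla\phi\|_2^2$ to bound $|\int_{\R^N} V_1 \phi_k^2 \dx| \leq C_N \|V_1\|_{N/2}\|\nabla\phi\|_2^2 \, t_k^{2-N}$ by half of $\int |\nabla\phi_k|^2 \dx$. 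This gives (A3), and Theorem~\ref{thm_general} then yields the claimed unbounded sequence of $2\pi$-periodic real-valued distributional breathers in $L^q(\R^N; L^p_s(\T))$.
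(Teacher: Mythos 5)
Your proposal takes essentially the same route as the paper: apply Theorem~\ref{thm_general} with $\L=-\Delta+V$, obtain (A1) from Lemma~\ref{lem_LAP_CosMan} exactly as you describe, obtain (A2) from \cite{CosMan} (the paper simply cites \cite[Proposition~8]{CosMan} here, so your resolvent-identity sketch is extra but consistent with what that reference proves), and obtain (A3) via the Introduction's criterion by splitting $V=V_1+V_2$ with $\|V_1\|_{N/2}$ small and testing with $\phi_k=\phi(t_k\cdot)$ for $t_k$ large. The only real variation is in how the $V_2$-contribution is disposed of: the paper first uses the interpolation inequality $\|\psi\|_{2(N+1)/(N-1)}^2\leq\|\psi\|_{2N/(N-2)}^{2N/(N+1)}\|\psi\|_2^{2/(N+1)}$ together with Young's inequality to reduce everything uniformly to a combination of $\|\nabla\phi_k\|_2^2$ and $\|\phi_k\|_2^2$ before scaling, whereas you observe directly that $\|\phi_k\|_{2(N+1)/(N-1)}^2 = t_k^{-N(N-1)/(N+1)}\|\phi\|_{2(N+1)/(N-1)}^2$ carries a strictly larger negative power of $t_k$ than the Dirichlet term $t_k^{2-N}\|\nabla\phi\|_2^2$ and is hence negligible; both arguments are correct and give the same conclusion.

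One small imprecision to note: when you split $V_1$, you describe moving ``a bounded, compactly supported piece'' of the $L^{N/2}$-component into $V_2$. To be safe one should truncate both in height and in space, i.e.\ replace $V_1$ by $V_1\mathds{1}_{\{|V_1|>R\}\cup\{|x|>R\}}$ and add $V_1\mathds{1}_{\{|V_1|\leq R,\,|x|\leq R\}}$ to $V_2$; then dominated convergence makes the new $V_1$ small in $L^{N/2}$, while the shifted piece is bounded with compact support and hence lies in $L^{(N+1)/2}$. (The paper truncates only in height, $V_1\mathds{1}_{|V|\leq R}$, which suffices since bounded $L^{N/2}$-functions are automatically in $L^{(N+1)/2}$ by interpolation.) This is cosmetic and does not affect the validity of your argument.
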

  \begin{proof}
    In order to apply Theorem~\ref{thm_general}, we have to check the conditions (A1),(A2),(A3).
    Assumption (A1) follows from Lemma~\ref{lem_LAP_CosMan} and (A2) is a special case of
    \cite[Proposition~8]{CosMan}. To verify (A3) we proceed as outlined in the Introduction by choosing a test
    function  $\phi_k\in C_c^\infty(\R^N)$  such that $\int_{\R^N} \phi_k (-\Delta+V(x)-k^2)\phi_k\dx>0$.
     To this end we write $V=V_1 +V_2$ with $V_1\in L^{N/2}(\R^N),V_2\in
     L^{(N+1)/2}(\R^N)$. Replacing $V_1,V_2$ by
     $V_1 1_{|V|>R}$ respectively $V_1 1_{|V|\leq R}+V_2$ for large enough $R>0$ if necessary we can assume
     that $\|V_1\|_{N/2}\leq \delta$  some given $\delta>0$. From H\"older's inequality and
     the Interpolation inequality
     $$
       \|\psi\|_{\frac{2(N+1)}{N-1}}^2
       \leq \|\psi\|_{\frac{2N}{N-2}}^{\frac{2N}{N+1}} \|\psi\|_2^{\frac{2}{N+1}}
       \leq  \frac{\delta}{1+\|V_2\|_{\frac{N+1}{2}}}
       \|\psi\|_{\frac{2N}{N-2}}^2
        + C_\delta   \|\psi\|_2^2
        \qquad\text{for all }\psi\in C_c^\infty(\R^N)
     $$
     for some $C_\delta>0$ depending on $\delta,V_2$ we therefore get  
     \begin{align*}
       &\int_{\R^N} \phi_k (-\Delta+V(x)-k^2)\phi_k\dx \\
       &= \int_{\R^N} |\nabla\phi_k|^2 + (V(x)-k^2)|\phi_k|^2\dx \\
       &\geq \|\nabla\phi_k\|_2^2  -  \left( \|V_1\|_{\frac{N}{2}}
       \|\phi_k\|_{\frac{2N}{N-2}}^2 +
       \|V_2\|_{\frac{N+1}{2}}
       \|\phi_k\|_{\frac{2(N+1)}{N-1}}^2
       \right) - k^2 \|\phi_k\|_2^2 \\
       &\geq \|\nabla\phi_k\|_2^2  - 2\delta 
       \|\phi_k\|_{\frac{2N}{N-2}}^2 - (C_\delta+k^2)\|\phi_k\|_2^2  \\
       &\geq (1-2\delta  C_S^2) \|\nabla\phi_k\|_2^2  - (C_\delta+k^2)\|\phi_k\|_2^2.
   \end{align*}
   Here,  $C_S>0$ comes from Sobolev's Embedding Theorem. Choosing
   $0<\delta<\frac{1}{2C_S^2}$ and $\phi_k=\phi^*(t_k\cdot)$  for some fixed nontrivial $\phi^*\in
   C_c^\infty(\R^N)$ and large enough $t_k>0$ we get the result.
  \end{proof}

\subsection{Generalizations} \label{sec:gen}

Before going on with the proof of our main result we indicate further generalizations of
our method.
\begin{itemize}
  \item \textbf{(General periods)} In our main result we presented the theory for $2\pi$-periodic breathers.
  Clearly, by rescaling, there is an analogous theory for $T$-periodic breathers for any given $T>0$.
  Analytically this does not change much, but explicit criteria in applications
  need to be adapted. For instance, in Corollary~\ref{cor_kleingordon} dealing with the Klein-Gordon equation
  the conditions on the mass $m$ in (i),(ii),(iii) need to be replaced by the corresponding
  conditions on $\frac{Tm}{2\pi}$.
  \item \textbf{(Negative $Q$)} In (A2) we assume nonnegative $Q$ since this is required by the classical dual
  variational approach that we implement in our paper. In the context of Helmholtz-type problems it is
  possible to deal with $Q\leq 0$ as well. Indeed, following \cite[Section~3]{MaMoPe_Osc} we can
  slightly modify our functional $J$ from~\eqref{eq_wavefunctional} below to cover this case.
  In this way one obtains the existence of infinitely many breathers in that case.
  In the case of an elliptic operator $\L$, say $\L=-\Delta$, the corresponding solutions
  are not stationary and hence polychromatic provided that $Q<0$ (no matter what
  $s\in\{1,\ldots,5\}$ is).
  As explained right after Theorem~\ref{thm_general}, this indeed follows once we know that constant in time
  solutions do not exist, and that is true because the maximum principle
  implies that solutions of $-\Delta U = Q|U|^{p-2}U$ are necessarily trivial due to $Q< 0$.
  Sign-changing $Q$ can be treated in the context of   Helmholtz equations~\cite{ManSchYes} and it might be
  that these techniques can be adapted to construct breather solutions.
  \item \textbf{(Non-Euclidean Settings)}  Resolvent estimates of type $L^p-L^q$ also hold in the hyperbolic
  space, see~\cite[Theorem~2.3]{CasMan} and~\cite[Theorem~1.2]{Sogge}. The decay rate of the operator norms of the
  corresponding  distributional  right inverses with respect to $k$, however, is not known as far as we can
  see. We expect that our method applies once a bound as in (A1) is proved. 
  \item \textbf{(General evolutions)} The wave operator $\partial_{tt} + \L$ can be replaced by
  $P(-i\partial_t)+\L$ where $P:\R\to\R$ is a polynomial. In that case, distributional right 
  inverses for $\L+P(k)$ are needed instead of $\L-k^2$ and the definition of $\I_s$ and
  Proposition~\ref{prop_waveresolvent}~(i) have to be adapted in order to ensure the compatiblity of
  the operator $\mathscr R$ (see the following section) with the imposed symmetries. The results for odd
  respectively even polynomials $P$ will be different here.
  \item \textbf{(Systems)} One may ask whether breathers also exist for coupled nonlinear wave equations.
  Following our approach, this leads to infinite systems of coupled nonlinear Helmholtz
  systems. We believe that some ideas from the paper~\cite{ManSch_Dual} about $2\times 2$-Nonlinear
  Helmholtz Systems can be used.
  \item \textbf{(General nonlinearities)} Our paper deals with power-type nonlinearities, but the dual
  variational technique is actually  more flexible. To apply this method to a general nonlinearity
  $f(x,u)$ (replacing $Q(x)|u|^{p-2}u$ in~\eqref{eq_general}), one has to require the invertibility of
  $z\mapsto f(x,z)$ for almost all $x\in{\R^N}$. This is guaranteed by imposing a
  monotonicity assumption with respect to $z$. Moreover, this inverse needs to give rise to a dual functional having the
  Mountain Pass Geometry on an appropriate Banach space. In some cases, if the nonlinearity does not behave
  like a pure power, Orlicz spaces can be used, see for instance~\cite{Eveq_Orlicz}. Being
  interested in an unbounded sequence of breather solutions, one may further  impose that the
  nonlinearity is odd with respect to the second entry. We have to admit that the particularly important case
  $f(x,u)=|u|^{p-2}u$ is not covered by any of our examples. The reason is that assumption (A2) may not hold,
  for instance due to the translation invariance of $\mathcal R_k$. Here,   more sophisticated variational
  methods such as the Concentration-Compactness Principle could prove to be useful in order to overcome the
  lack of compactness. An idea for a fixed point approach aiming at the construction of small
  breather solutions for general nonlinearities or $f(x,u)= |u|^{p-2}u$ can be found in~\cite{Man_Uncountably}.
\end{itemize}

\section{Proof of Theorem~\ref{thm_general}} \label{sec:ProofThm}

To motivate our variational approach we introduce the formal Fourier series expansion
$$
  U(t,x)= \sum_{k\in \I_s} \e{\i kt}u_k(x)\quad
  \text{with Fourier modes }
	u_k(x) := [U]_k(x) := \frac{1}{2\pi} \int_\T \e{-\i k t} U(t,x) \dt.
$$
Recall that $\I_s$ collects the frequencies that are needed for building up breather solutions $U$ with the
symmetry indexed by $s\in\{1,\ldots,5\}$ as in the Introduction. Plugging this ansatz into~\eqref{eq_general}
we are lead to the infinite system of equations
\begin{align*}
	( \L - k^2)  u_k
	=  [Q|U|^{p-2}U]_k
	= Q^{1/p}\, [Q^{1/p'}|U|^{p-2}U]_k
	\qquad (k\in\I_s).
\end{align*}
We introduce the dual variable $V := Q^{1/p'}|U|^{p-2}U$ with formal Fourier series expansion
\begin{align*}
	V(t, x) = \sum_{k \in \I_s} \e{\i k t} v_k(x)
	\quad \text{with } v_k(x) = [V]_k(x) = \frac{1}{2\pi} \int_\T \e{-\i k t} V(t,x) \dt.
\end{align*}
To find solutions of the above-mentioned infinite system, it is sufficient
to solve the following   equations:
\begin{align*}
	&\qquad\quad Q^{1/p} u_k =  \mathcal{R}^Q_k \left[  [Q^{1/p'}|U|^{p-2}U]_k \right]\qquad\text{for all
	}k\in\I_s \\
	& \rightsquigarrow \quad
	Q^{1/p} U(t,\cdot) = \sum_{k \in \I_s} \e{\i k t}  \mathcal{R}^Q_k \left[  [Q^{1/p'}|U|^{p-2}U]_k \right]
	\\
	& \rightsquigarrow \quad
	|V(t,\cdot)|^{p'-2}V(t,\cdot)  = \sum_{k \in \I_s} \e{\i k t}  \mathcal{R}^Q_k \left[  v_k
	\right]	\\
 	& \rightsquigarrow \quad
 	|V|^{p'-2}V  = \mathscr{R}[V]
\end{align*}
where the operator $\mathscr{R}$ is defined via
\begin{equation*}
	\mathscr{R}[V] (t, x) :=
	\sum_{k \in \I_s} \e{\i k t}  \mathcal{R}^Q_k \left[  v_k \right](x)
	\quad \text{with }
	v_k(x) = [V]_k(x) = \frac{1}{2\pi} \int_\T \e{-\i k t} V(t,x) \dt.
\end{equation*}
Similarly, we can  derive the formula $U = (Q^{-1/p}\mathscr R)[V]$ where
$$
  (Q^{-1/p}\mathscr R)[V]  := \sum_{k\in\I_s} \e{\i kt} \mathcal R_k\big[Q^{1/p}v_k\big],
$$
Notice that this operator makes sense under our hypothesis $Q\geq 0$ and
$Q^{1/p}(Q^{-1/p}\mathscr R)[V]= \mathscr R[V]$. 
Since $\mathscr{R}$ will turn out to be symmetric, the above equation for $V$ has a variational structure. It
is the Euler-Lagrange equation of the functional
\begin{align}\label{eq_wavefunctional}
	J(V) := \frac{1}{p'} \int_{\T \times {\R^N}} |V|^{p'} \dtx
	- \frac{1}{2} \int_{\T \times {\R^N}} V	\mathscr{R}[V]	\dtx
\end{align}
so that we are lead to prove the existence of critical points. This motivates the following discussion of
the functional $J$ and finishes the nonrigorous introductory part of this section.

\medskip

We start our rigorous analysis of the functional by proving that  $J$ is well-defined and continuously
differentiable on the Banach space $X^{p'}_s$ where, from now on, $X^r_s:= L^r({\R^N}, L^r_s(\T))$ for $r\in
(1,\infty)$ and $s\in\{1,..,5\}$. These spaces were introduced at the beginning of the paper.
We will
need the Hausdorff-Young inequality for Fourier series that we recall for the convenience of the reader.

\begin{prop}[Hausdorff-Young] \label{prop_HY}
  Let $p\in [2,\infty]$. Then there is a $C>0$ such that
 \begin{align}
  \|f\|_{L^{p}(\T)}&\leq C \|\hat f \|_{\ell^{p'}(\Z)} \label{eq:HYI} \\
  \|\hat g\|_{\ell^p(\Z)} &\leq C \|g\|_{L^{p'}(\T)} \label{eq:HYII}
\end{align}
whenever $\hat f\in \ell^{p'}(\Z)$ and $g\in L^{p'}(\T)$. Here, $\hat g(k):= \frac{1}{2\pi} \int_\T \e{-\i k
t} g(t) \dt$ for $k\in\Z$.
\end{prop}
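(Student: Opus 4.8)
The plan is to derive both \eqref{eq:HYI} and \eqref{eq:HYII} from the Riesz--Thorin interpolation theorem, interpolating between the elementary endpoint $p=\infty$ (equivalently $p'=1$) and the Plancherel endpoint $p=2$. Since this is a classical fact, the only real work is bookkeeping the constants for the normalization $\hat g(k)=\frac{1}{2\pi}\int_\T\e{-\i kt}g(t)\dt$ adopted here and checking that the quantities involved make sense throughout the range $p\in[2,\infty]$.

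For \eqref{eq:HYI} I would introduce the synthesis map $Ta:=\sum_{k\in\Z}a_k\e{\i k\cdot}$. At the endpoints one has the trivial bound $\|Ta\|_{L^\infty(\T)}\le\sum_k|a_k|=\|a\|_{\ell^1(\Z)}$ and, by Parseval's identity, $\|Ta\|_{L^2(\T)}=\sqrt{2\pi}\,\|a\|_{\ell^2(\Z)}$; for the relevant range $p\ge2$ we have $p'\le2$, so the hypothesis $\hat f\in\ell^{p'}(\Z)$ forces $\hat f\in\ell^2(\Z)$ and $T\hat f$ is at least a well-defined element of $L^2(\T)$. Riesz--Thorin with parameter $\theta\in(0,1]$ then produces a bounded map $T\colon\ell^{q_\theta'}(\Z)\to L^{q_\theta}(\T)$ with $\tfrac1{q_\theta}=\tfrac\theta2$ (so $q_\theta=2/\theta$ sweeps out $[2,\infty)$) and operator norm at most $1^{1-\theta}(\sqrt{2\pi})^\theta=(2\pi)^{1/q_\theta}\le\sqrt{2\pi}$; the domain exponent is automatically the conjugate of $q_\theta$ since $\tfrac1{q_\theta'}=(1-\theta)+\tfrac\theta2=1-\tfrac1{q_\theta}$. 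Taking $\theta=2/p$ yields \eqref{eq:HYI} with $C=\sqrt{2\pi}$, the case $p=\infty$ being the trivial endpoint itself.

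For \eqref{eq:HYII} one argues symmetrically with the analysis map $Sg:=(\hat g(k))_{k\in\Z}$: here $\|Sg\|_{\ell^\infty(\Z)}\le\frac1{2\pi}\|g\|_{L^1(\T)}$ and $\|Sg\|_{\ell^2(\Z)}=\frac1{\sqrt{2\pi}}\|g\|_{L^2(\T)}$, and since $p\ge2$ means $p'\le2$ we have $g\in L^{p'}(\T)\subseteq L^1(\T)$ by finiteness of the measure, so $\hat g$ is well-defined. Riesz--Thorin gives $S\colon L^{q_\theta'}(\T)\to\ell^{q_\theta}(\Z)$ with norm at most $\big(\tfrac1{2\pi}\big)^{1-\theta}\big(\tfrac1{\sqrt{2\pi}}\big)^{\theta}=(2\pi)^{-1/q_\theta'}\le1$, and $\theta=2/p$ gives \eqref{eq:HYII} with $C=1$; alternatively one observes that $S=\tfrac1{2\pi}T^{*}$ and obtains \eqref{eq:HYII} for $2\le p<\infty$ by dualizing \eqref{eq:HYI}, with the endpoint $p=\infty$ handled directly. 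Replacing the two constants by their maximum yields a single $C$ valid for both inequalities. There is no genuine obstacle here: everything reduces to the two classical endpoint identities together with Riesz--Thorin, and the only points requiring care are the normalizing constants and the verification that the objects in \eqref{eq:HYI}--\eqref{eq:HYII} are well-defined precisely when $p\in[2,\infty]$.
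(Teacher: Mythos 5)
Your proof is correct. There is nothing in the paper to compare it against: Proposition~\ref{prop_HY} is stated without proof, explicitly introduced as a classical fact recalled ``for the convenience of the reader.'' Your Riesz--Thorin argument, interpolating the synthesis map between the trivial $\ell^1(\Z)\to L^\infty(\T)$ endpoint and the Plancherel $\ell^2(\Z)\to L^2(\T)$ endpoint, and dually for the analysis map, is the standard textbook derivation; the constant bookkeeping for the normalization $\hat g(k)=\frac{1}{2\pi}\int_\T \e{-\i kt}g(t)\dt$ is handled correctly, the exponent arithmetic $\tfrac1{q_\theta}=\tfrac\theta2$, $\tfrac1{q_\theta'}=1-\tfrac\theta2$ checks out, and the well-definedness remarks (namely $p'\le 2$ so $\ell^{p'}\subseteq\ell^2$ and $L^{p'}(\T)\subseteq L^1(\T)$) are exactly what is needed for the range $p\in[2,\infty]$. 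Taking $C=\sqrt{2\pi}$ as the common constant is fine.
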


The proofs of the following propositions are postponed to Section~\ref{sec:ProofAux}.

\begin{prop}\label{prop_waveresolvent}
Assume~(A1),(A2).
\begin{itemize}
  \item[(i)] The operator $\mathscr{R}: X^{p'}_s\to X^p_s$ is well-defined, continuous, symmetric and compact.
  \item[(ii)] The operator $Q^{-1/p}\mathscr{R}: X^{p'}_s\to L^q({\R^N},L^p_s(\T))$ is  well-defined and
  continuous.
\end{itemize}
\end{prop}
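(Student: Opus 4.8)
The plan is to exploit the Fourier decomposition in time to reduce everything to the mode-wise operators $\mathcal R_k^Q$, for which (A1)–(A2) provide the necessary norm bounds and compactness, and then to sum over $k$ using the Hausdorff–Young inequality (Proposition~\ref{prop_HY}) together with the decay $\|\mathcal R_k^Q\|\leq C(k^2+1)^{-\alpha/2}$ with $\alpha>1-\frac2p$. I would first record the basic mode-wise estimate: for $v\in L^{p'}(\R^N)$, combining (A2)'s boundedness $\mathcal R_k^Q:L^{p'}(\R^N)\to L^p(\R^N)$ (which by the remark after (A3) follows from (A1) and $Q\in L^{q/(q-p)}$, see~\eqref{eq:Bound:RQ}) with the norm decay gives $\|\mathcal R_k^Q[v_k]\|_{L^p(\R^N)}\leq C(k^2+1)^{-\alpha/2}\|v_k\|_{L^{p'}(\R^N)}$. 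Now for $V\in X_s^{p'}$, write $V=\sum_k\e{\i kt}v_k$. For $\mathscr R[V](\cdot,x)=\sum_k\e{\i kt}\mathcal R_k^Q[v_k](x)$ apply Hausdorff–Young~\eqref{eq:HYI} in $t$: $\|\mathscr R[V](\cdot,x)\|_{L^p(\T)}\leq C\big(\sum_k\|\mathcal R_k^Q[v_k](x)\|^{p'}\big)^{1/p'}$. Taking the $L^q(\R^N)\supseteq L^p(\R^N)$-norm in $x$... more carefully, taking the $L^p(\R^N)$ norm in $x$ and using Minkowski's integral inequality in the form $\big\|(\sum_k|f_k|^{p'})^{1/p'}\big\|_{L^p}\leq(\sum_k\|f_k\|_{L^p}^{p'})^{1/p'}$ (valid since $p\geq p'$), one gets $\|\mathscr R[V]\|_{X^p_s}\leq C(\sum_k\|\mathcal R_k^Q[v_k]\|_{L^p}^{p'})^{1/p'}\leq C(\sum_k(k^2+1)^{-\alpha p'/2}\|v_k\|_{p'}^{p'})^{1/p'}$. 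If $\alpha>1-\frac2p$ then $\alpha p'>p'-\frac{2p'}{p}=p'-2(p'-1)=2-p'>0$, so $\alpha p'/2$ might not exceed $1$; instead I would use Hölder in $k$ to pull out $(\sum_k(k^2+1)^{-\alpha p'/(2-?)})$ — actually the clean route is: $\sum_k(k^2+1)^{-\alpha p'/2}\|v_k\|_{p'}^{p'}$ need not converge by itself, so instead bound $\|\mathcal R_k^Q[v_k]\|_{L^p}\leq C(k^2+1)^{-\alpha/2}\|v_k\|_{p'}$ and then, treating $\mathscr R[V]$ mode by mode, use~\eqref{eq:HYII} to control $\|v_k\|_{L^{p'}(\R^N)}$ in terms of $\|V\|_{X^{p'}_s}$ after first applying Minkowski. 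The upshot is a bound $\|\mathscr R[V]\|_{X^p_s}\leq C\|V\|_{X^{p'}_s}$ once one checks $\sum_k(k^2+1)^{-\alpha\sigma/2}<\infty$ for the relevant Hölder exponent $\sigma$; the condition $\alpha>1-\frac2p$ is exactly what makes this series converge. Symmetry of $\mathscr R$ on $X^{p'}_s\times X^{p'}_s$ follows by Parseval in $t$ (the cross terms $\int_\T\e{\i(k-l)t}\dt$ vanish unless $k=l$) and the symmetry $\int\mathcal R_k^Q f\,g=\int f\,\mathcal R_k^Q g$, which in turn comes from the symmetry of $\mathcal R_k$ in (A1) and the definition $\mathcal R_k^Q[v]=Q^{1/p}\mathcal R_k[Q^{1/p}v]$.

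For compactness in part (i), I would argue as follows. Each $\mathcal R_k^Q:L^{p'}(\R^N)\to L^p(\R^N)$ is compact by (A2). Truncate the Fourier series: let $\mathscr R_K$ be the operator keeping only modes $|k|\leq K$. Then $\mathscr R_K$ is a finite sum of operators of the form $V\mapsto\e{\i kt}\mathcal R_k^Q([V]_k)$, each of which factors through the compact map $\mathcal R_k^Q$ composed with the bounded projection $V\mapsto[V]_k$ (bounded $X^{p'}_s\to L^{p'}(\R^N)$ by~\eqref{eq:HYII} and Minkowski) and the bounded inclusion of one mode back into $X^p_s$; hence $\mathscr R_K$ is compact. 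The tail estimate $\|\mathscr R-\mathscr R_K\|_{X^{p'}_s\to X^p_s}$ is controlled by $\sup$ or $\ell^\sigma$-sum of $(k^2+1)^{-\alpha/2}$ over $|k|>K$, which tends to $0$ as $K\to\infty$ by the same convergent-series argument as above. Therefore $\mathscr R$ is a norm-limit of compact operators, hence compact.

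For part (ii), the argument is the same computation but with the target norm in $L^q(\R^N)$ instead of $L^p(\R^N)$ in the $x$-variable, and using $\mathcal R_k[Q^{1/p}\cdot]:L^{p'}(\R^N)\to L^q(\R^N)$. From (A1) and $Q\in L^{q/(q-p)}(\R^N)$ (via Hölder as in~\eqref{eq:Bound:RQ}), one has $\|\mathcal R_k[Q^{1/p}v_k]\|_{L^q(\R^N)}\leq C(k^2+1)^{-\alpha/2}\|v_k\|_{L^{p'}(\R^N)}$; then Hausdorff–Young in $t$ (note $q\geq p>2$ so~\eqref{eq:HYI} applies with exponent $q$), Minkowski's integral inequality in $x$ (now with $L^q$, and $q\geq p'$ since $q>2>p'$), and the convergent series in $k$ give $\|(Q^{-1/p}\mathscr R)[V]\|_{L^q(\R^N,L^p_s(\T))}\leq C\|V\|_{X^{p'}_s}$; continuity is then linearity plus boundedness. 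One must also check the symmetry class is preserved, i.e. that $(Q^{-1/p}\mathscr R)[V]$ carries the symmetry indexed by $s$: this holds because $\mathcal R_k=\mathcal R_{-k}$, $v_{-k}=\overline{-v_k}$-type relations are respected, and only modes $k\in\I_s$ occur, so the output lies in $L^q(\R^N,L^p_s(\T))$.

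I expect the main obstacle to be the careful bookkeeping in the interchange of the $t$-integral and $x$-integral and the precise use of Minkowski's inequality to land the Fourier coefficients on the correct side, together with pinning down the exact Hölder exponent $\sigma$ for which $\sum_k(k^2+1)^{-\alpha\sigma/2}<\infty$ under the hypothesis $\alpha>1-\frac2p$; everything else (symmetry, the truncation argument for compactness) is routine once that estimate is in place.
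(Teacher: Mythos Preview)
Your plan for well-definedness, continuity, symmetry, and part~(ii) follows the paper's argument closely: Hausdorff--Young~\eqref{eq:HYI} in $t$, the triangle/Minkowski inequality to pass to the $\ell^{p'}$-sum of the $\|\mathcal R_k^Q[v_k]\|_p$, the mode-wise bound~\eqref{eq:Bound:RQ}, H\"older in $k$, and then~\eqref{eq:HYII} to close. Your hesitation about the ``exact H\"older exponent $\sigma$'' is the one place where you are vaguer than the paper; the paper splits via the conjugate pair $(\tfrac{p-1}{p-2},p-1)$ so that the relevant series is $\sum_k(k^2+1)^{-\alpha p/(2(p-2))}$, and the convergence condition is precisely $\alpha>1-\tfrac2p$. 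Once you pin that down, the rest of your boundedness and part~(ii) outline matches the paper line for line.

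Your compactness argument, however, is genuinely different from the paper's and is in fact cleaner. The paper takes a bounded sequence $(V^{(n)})$, extracts by a diagonal argument a subsequence along which $\mathcal R_k^Q[v_k^{(n_j)}]\to y_k$ in $L^p(\R^N)$ for every $k$, and then shows by a tail estimate based on~\eqref{eq_cauchyestimate} that $\mathscr R[V^{(n_j)}]$ converges in $X^p_s$. You instead truncate to $\mathscr R_K$ (modes $|k|\le K$), observe that each truncation is a finite sum of operators factoring through the compact $\mathcal R_k^Q$, and use the very same tail bound~\eqref{eq_cauchyestimate} to get $\|\mathscr R-\mathscr R_K\|_{X^{p'}_s\to X^p_s}\to 0$; compactness then follows from the norm-limit criterion. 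Both routes rest on the identical uniform tail estimate, but yours avoids the diagonal bookkeeping and the separate verification that $\sum_k\e{\i kt}y_k$ lies in $X^p_s$. The paper's approach, on the other hand, makes the limiting object explicit, which can be convenient if one later wants to identify weak limits. Either is perfectly acceptable here.
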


Using part~(i) of the previous proposition we show that $J$ satisfies the
assumptions of the Symmetric Mountain Pass Theorem. This   allows to conclude that there is an unbounded
sequence of critical points. Those will provide the breather solutions $U$ after inverting the formal passage
to the dual variables from the beginning of this section.

\begin{prop}\label{prop_wavefunctional}
Assume~(A1),(A2),(A3). Then the functional $J: \:X^{p'}_s \to \R$ as
in~\eqref{eq_wavefunctional} is even, continuously differentiable and has the Mountain Pass Geometry:
\begin{itemize}
\item[(i)] $J(0) = 0$ and there are $r,\delta > 0$ with $J(V) \geq \delta$ for all $V \in X^{p'}_s$ with
$\norm{V}_{p'} = r$.
\item[(ii)] There is an increasing sequence of linear subspaces $\mathfrak{W}^{(m)} \subseteq X^{p'}_s$ of
dimension $m$ and radii $R_m > r$ such that $J(V) < 0$ for all $V \in \mathfrak{W}^{(m)}$ with $\norm{V}_{p'} > R_m$.
\item[(iii)] $J$ satisfies the Palais-Smale condition.
\end{itemize}
\end{prop}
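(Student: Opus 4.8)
The plan is to verify the three conditions of the Symmetric Mountain Pass Theorem for the functional $J$ on $X^{p'}_s$, using Proposition~\ref{prop_waveresolvent}~(i) as the main technical input. First I would record that $J$ is even (immediate from the evenness of $V\mapsto \int|V|^{p'}$ and the bilinearity and symmetry of $\mathscr R$) and continuously differentiable on $X^{p'}_s$: the first term is $C^1$ by the standard smoothness of $L^{p'}$-norm-type functionals for $p'>1$, and the second term is a continuous symmetric bilinear form by the boundedness of $\mathscr R:X^{p'}_s\to X^p_s$ together with the duality pairing of $X^p_s$ with $X^{p'}_s$, so its derivative is $V\mapsto \mathscr R[V]$. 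The Euler-Lagrange equation $|V|^{p'-2}V=\mathscr R[V]$ then reads off as $J'(V)=0$.

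For part (i), I would expand $J(V)=\tfrac1{p'}\|V\|_{p'}^{p'}-\tfrac12\langle V,\mathscr R[V]\rangle$ and bound the quadratic term: $|\langle V,\mathscr R[V]\rangle|\le \|V\|_{p'}\|\mathscr R[V]\|_p\le \|\mathscr R\|\,\|V\|_{p'}^2$. Since $p'<2$, for $\|V\|_{p'}=r$ small the dominant term is $\tfrac1{p'}r^{p'}$, which beats $\tfrac12\|\mathscr R\|r^2$; choosing $r$ small enough gives the uniform lower bound $\delta>0$. For part (ii), this is where assumption~(A3) enters, and I expect it to be the most delicate point. On a finite-dimensional subspace $\mathfrak W^{(m)}$ all norms are equivalent, so along $\mathfrak W^{(m)}$ the term $\tfrac1{p'}\|V\|_{p'}^{p'}$ grows like $\|V\|_{p'}^{p'}$ with $p'<2$, while I need the quadratic term $\tfrac12\langle V,\mathscr R[V]\rangle$ to be \emph{positive} and to grow like $\|V\|_{p'}^2$ on that subspace so that $J(V)\to-\infty$ as $\|V\|_{p'}\to\infty$. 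The natural choice is to build $\mathfrak W^{(m)}$ from functions of the form $V=\sum \e{\i k t}c_k\omega_k$ (suitably truncated, with the reality/symmetry constraints indexed by $s$), using the functions $\omega_k$ from~(A3): for such $V$ the quadratic term decouples over modes into $\sum |c_k|^2\int_{\R^N}\omega_k\mathcal R_k^Q[\omega_k]\dx$, which is positive by~(A3); restricting to a positive-definite finite-dimensional block gives an increasing family of subspaces of every dimension $m$ on which the quadratic form is coercive, whence radii $R_m>r$ exist with $J<0$ outside the ball of radius $R_m$.

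For part (iii), the Palais-Smale condition, I would take a sequence $(V_n)$ with $J(V_n)$ bounded and $J'(V_n)\to0$ in $(X^{p'}_s)^*$. Boundedness of $(V_n)$ in $X^{p'}_s$ follows from the usual Ambrosetti-Rabinowitz-type computation: evaluating $J(V_n)-\tfrac12 J'(V_n)[V_n]=(\tfrac1{p'}-\tfrac12)\|V_n\|_{p'}^{p'}$ and using $p'<2$, so $\tfrac1{p'}-\tfrac12>0$, bounds $\|V_n\|_{p'}^{p'}$ in terms of $J(V_n)+\|J'(V_n)\|\,\|V_n\|_{p'}$, which forces boundedness. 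Then, up to a subsequence, $V_n\rightharpoonup V$ weakly in $X^{p'}_s$; by the compactness of $\mathscr R$ from Proposition~\ref{prop_waveresolvent}~(i), $\mathscr R[V_n]\to\mathscr R[V]$ strongly in $X^p_s$. From $J'(V_n)\to0$ one has $|V_n|^{p'-2}V_n=\mathscr R[V_n]+o(1)$ in $X^p_s$, so $|V_n|^{p'-2}V_n$ converges strongly in $X^p_s$; applying the (uniformly continuous, being a power map with exponent $1/(p'-1)=p-1$) inverse of $w\mapsto |w|^{p'-2}w$, which maps $X^p_s$ back to $X^{p'}_s$, yields strong convergence of $V_n$ in $X^{p'}_s$. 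The main obstacle throughout is part (ii): one must produce the subspaces $\mathfrak W^{(m)}$ explicitly and check that the mixed terms between different modes in $\langle V,\mathscr R[V]\rangle$ vanish (orthogonality of the exponentials $\e{\i kt}$ on $\T$) so that positivity on each $\omega_k$-block propagates to positivity of the full quadratic form, and that the construction is compatible with the symmetry constraint encoded by $s$.
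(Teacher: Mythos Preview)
Your plan is correct and follows essentially the same route as the paper for parts (i) and (ii): the lower bound near zero via $\|\mathscr R\|$ and $p'<2$, and the construction of $\mathfrak W^{(m)}$ from single-mode functions built out of the $\omega_k$ from (A3), with the mixed terms in $\langle V,\mathscr R[V]\rangle$ vanishing by $L^2(\T)$-orthogonality of the exponentials.

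For part (iii) your argument differs slightly from the paper in the final step. After obtaining $\mathscr R[V_n]\to\mathscr R[V]$ strongly and hence $|V_n|^{p'-2}V_n\to\mathscr R[V]$ strongly in $X^p_s$, you invert the Nemytskii map $w\mapsto |w|^{p-2}w$ to conclude $V_n\to V$ in $X^{p'}_s$. This works because that map is continuous from $L^p$ to $L^{p'}$ (standard Nemytskii theory, since $(p-1)p'=p$); note however that ``uniformly continuous'' is neither needed nor obviously true, so you should just say ``continuous''. The paper instead uses a convexity inequality for $t\mapsto|t|^{p'}$ to deduce $\limsup\|V_n\|_{p'}\le\|V\|_{p'}$, then combines this with weak lower semicontinuity and uniform convexity of $L^{p'}$ to upgrade weak to strong convergence. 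Your route is a bit more direct; the paper's avoids any discussion of continuity of the inverse Nemytskii operator. Either is fine.
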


The last of our preparatory results shows that each critical point of $J$ indeed provides a $2\pi$-periodic
real-valued  distributional breather solution as claimed in Theorem~\ref{thm_general}. Here we use
part~(ii) of Proposition~\ref{prop_waveresolvent}.

\begin{prop}\label{prop_wavesolution}
  Assume~(A1),(A2) and let  $V \in X^{p'}_s$ be a nontrivial critical point of $J$. Then the
  function $U := (Q^{-1/p}\mathscr R)[V]\in L^q({\R^N},L^p_s(\T))$ is a nontrivial
  $2\pi$-periodic real-valued  distributional breather solution of the nonlinear wave
  equation~\eqref{eq_general} in the sense of~\eqref{eq_weaksolution}.
\end{prop}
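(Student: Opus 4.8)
The plan is to start from the Euler--Lagrange equation $J'(V)=0$, rewrite it as the fixed-point identity $|V|^{p'-2}V=\mathscr{R}[V]$, and then "undo" the passage to the dual variable by applying $Q^{-1/p}\mathscr{R}$ and comparing Fourier modes. First I would compute $J'(V)$ explicitly: since $J(V)=\frac1{p'}\|V\|_{p'}^{p'}-\frac12\int_{\T\times\R^N}V\mathscr{R}[V]$ and $\mathscr{R}$ is symmetric and continuous from $X^{p'}_s$ to $X^p_s$ by Proposition~\ref{prop_waveresolvent}(i), one gets $\langle J'(V),\Psi\rangle=\int_{\T\times\R^N}\big(|V|^{p'-2}V-\mathscr{R}[V]\big)\Psi$ for all $\Psi\in X^{p'}_s$; here one must check that the Gateaux derivative of the convex term $\frac1{p'}\|\cdot\|_{p'}^{p'}$ is indeed $V\mapsto|V|^{p'-2}V\in X^p_s=(X^{p'}_s)^*$ and that the bilinear term is differentiable with the stated derivative, both of which are standard. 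Restricting test functions to the dense subspace and using that both sides lie in $X^p_s$, I conclude the pointwise (a.e.) identity $|V|^{p'-2}V=\mathscr{R}[V]$ in $X^p_s$.

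Next I set $U:=(Q^{-1/p}\mathscr{R})[V]$, which by Proposition~\ref{prop_waveresolvent}(ii) is a well-defined element of $L^q(\R^N,L^p_s(\T))$; the symmetry index $s$ is preserved because $\mathcal R_k=\mathcal R_{-k}$ and $u_k=\overline{-u_k}$ is respected by the construction (this gives real-valuedness and the $s$-symmetry of $U$, so $U\in L^q(\R^N,L^p_s(\T))$ as claimed). Nontriviality of $U$ follows from nontriviality of $V$: if $U\equiv0$ then $\mathscr{R}[V]=Q^{1/p}U\equiv0$, hence $|V|^{p'-2}V\equiv0$, hence $V\equiv0$, a contradiction. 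The identity $Q^{1/p}U=\mathscr{R}[V]$ noted after the definition of $Q^{-1/p}\mathscr{R}$ then gives, mode-by-mode, $Q^{1/p}u_k=\mathcal R^Q_k[v_k]=Q^{1/p}\mathcal R_k[Q^{1/p}v_k]$, and also $Q^{1/p'}|U|^{p-2}U=V$ should be recovered from $|V|^{p'-2}V=Q^{1/p}U$ by raising to the power $p-1$ (using $(p'-1)(p-1)=1$ and that $t\mapsto|t|^{p'-2}t$ is a bijection of $\R$); this reconciles the dual substitution.

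It remains to verify the distributional equation~\eqref{eq_weaksolution}. I would take $\Phi\in C_c^\infty(\R^N,C^\infty(\T))$, expand $\Phi(t,x)=\sum_{k\in\Z}\e{\i kt}\phi_k(x)$ with $\phi_k\in C_c^\infty(\R^N)$ (rapidly decaying in $k$ in every $W^{m,\infty}(K)$-norm), and compute $\int_{\T\times\R^N}U(\partial_{tt}-\L)\Phi$. Using the Fourier expansion of $U$ and Parseval in $t$, this becomes $2\pi\sum_{k}\int_{\R^N}\mathcal R_k[Q^{1/p}v_k]\cdot(-k^2-\L)\phi_{-k}\dx$; note $[(\partial_{tt}-\L)\Phi]_{-k}=(-k^2-\L)\phi_{-k}$. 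By the symmetry of $\mathcal R_k$ together with the defining relation $\int \mathcal R_k f\cdot(\L-k^2)\phi=\int f\phi$ from (A1), each summand equals $-2\pi\int_{\R^N}Q^{1/p}v_k\cdot\phi_{-k}\dx$. Resumming and using $Q^{1/p}V=Q|U|^{p-2}U$ (from $V=Q^{1/p'}|U|^{p-2}U$) yields exactly $\int_{\T\times\R^N}Q|U|^{p-2}U\,\Phi$, which is~\eqref{eq_weaksolution}. The main obstacle is the rigorous justification of interchanging the $k$-summation with the spatial integrals and with the application of $\mathcal R_k$: one needs the Hausdorff--Young inequality (Proposition~\ref{prop_HY}), the norm decay $\|\mathcal R_k\|\lesssim(k^2+1)^{-\alpha/2}$ with $\alpha>1-\tfrac2p$, the rapid decay of the $\phi_k$, and Hölder in $x$ to make every rearrangement absolutely convergent; this is essentially the same bookkeeping underlying Proposition~\ref{prop_waveresolvent}, so I would either invoke those estimates directly or mirror their proof.
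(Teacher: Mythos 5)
Your overall strategy is the same as the paper's: read off $|V|^{p'-2}V=\mathscr{R}[V]$ from $J'(V)=0$, set $U:=(Q^{-1/p}\mathscr R)[V]$, use $Q^{1/p}U=\mathscr R[V]=|V|^{p'-2}V$ (hence $Q|U|^{p-2}U=Q^{1/p}V$), and verify the weak formulation mode by mode via the defining relation in (A1). Your observations about real-valuedness, preservation of the symmetry index $s$, nontriviality of $U$, and the inversion $V=Q^{1/p'}|U|^{p-2}U$ are all fine. There are, however, two issues.

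\textbf{Sign error in the mode-wise computation.} You expand $\Phi=\sum_k\e{\i kt}\phi_k$, compute $\int_{\T\times\R^N}U(\partial_{tt}-\L)\Phi\dtx$, and arrive at summands of the form $2\pi\int_{\R^N}\mathcal R_k[Q^{1/p}v_k]\cdot(-k^2-\L)\phi_{-k}\dx$. You then claim that, by (A1), each summand equals $-2\pi\int_{\R^N}Q^{1/p}v_k\phi_{-k}\dx$. This does not follow: (A1) gives $\int\mathcal R_kf\cdot(\L-k^2)\phi\dx=\int f\phi\dx$, and $(-k^2-\L)=-(\L+k^2)$ is not a scalar multiple of $\L-k^2$, so the defining relation simply does not apply. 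Moreover, even granting your claimed identity, resumming would give $-\int Q^{1/p}V\Phi=-\int Q|U|^{p-2}U\Phi$, which has the wrong sign. The root cause is the sign of $\L$ in the weak formulation: formally pairing $\partial_{tt}U+\L U=f$ with $\Phi$ and integrating by parts yields $\int U(\partial_{tt}+\L)\Phi\dtx=\int f\Phi\dtx$, and the paper's own proof of Proposition~\ref{prop_wavesolution} indeed uses $\partial_{tt}+\L$; the $\partial_{tt}-\L$ in~\eqref{eq_weaksolution} appears to be a slip. With the $+\L$ convention the mode-wise coefficient becomes $(\L-k^2)\phi_{-k}$, (A1) applies directly, and every sign closes. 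You should either notice and flag this inconsistency or work with $\partial_{tt}+\L$ throughout.

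\textbf{The limiting argument is thinner than the paper's.} You correctly identify the need to justify exchanging $\sum_k$ with the spatial integral, but the paper actually carries out a three-step argument: (a) verify the identity for test functions with finitely many modes inside $\I_s$; (b) extend to finite mode sets in $\Z$, using that $U$ and $Q|U|^{p-2}U$ are $L^2(\T)$-orthogonal to $\e{-\i kt}$ for $k\notin\I_s$; (c) pass to the limit of a full Fourier series using the $\|\phi_k\|_{W^{m,\infty}}\lesssim(1+k^2)^{-1}$--type decay together with (A1) to get absolute convergence. Your expansion over all of $\Z$ works implicitly (because $v_k=0$ for $k\notin\I_s$), but the dominated-convergence estimate in step (c) still needs to be done explicitly rather than invoked; it is not an automatic consequence of the estimates in Proposition~\ref{prop_waveresolvent}, since there you control $\mathcal R_k$ in Lebesgue norms but here you need to pair $U$ with arbitrarily high modes of $\Phi$ against the $L^{q'}$-bound on $\L\phi_k$ from (A1).
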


We summarize the arguments mentioned above to prove our main result.

\medskip

\noindent\textbf{Proof of Theorem~\ref{thm_general}:} \\
By Proposition~\ref{prop_wavefunctional} the functional $J$ satisfies all assumptions of the Symmetric
Mountain Pass Theorem~\cite[Corollary~7.23]{ghoussoub}. So there is an unbounded sequence of
critical values of $J$. Since $J$ maps bounded sets to bounded sets, we thus get an unbounded sequence of
critical points $(V_j)_{j\in\N_0}$.
By Proposition~\ref{prop_wavesolution}, the substitution $U_j := (Q^{-1/p}\mathscr R)[V_j]$
yields the asserted infinite sequence of distributional $2\pi$-periodic breather solutions of the nonlinear wave
equation~\eqref{eq_general} in $L^q({\R^N},L^p_s(\T))$. This sequence is unbounded because
H\"older's inequality implies
\begin{align*}
    \|Q\|_{\frac{q}{q-p}}^{1/p} \|U_j\|_q
  =\|Q^{1/p}\|_{\frac{pq}{q-p}} \|U_j\|_q
  \geq \|Q^{1/p} U_j\|_p
  = \|V_j\|_{p'}^{p'-1}
  \nearrow \infty \quad (j\to\infty).
\end{align*}
Here, in the last equality we used $Q^{1/p} U_j = Q^{1/p} (Q^{-1/p}\mathscr R)[V_j] = \mathscr
R[V_j]=|V_j|^{p'-2}V_j$ where the last equality follows from $J'(V_j)=0$.  \hfill $\square$

\section{Proofs of auxiliary results}  \label{sec:ProofAux}  

\subsection*{Proof of Proposition~\ref{prop_waveresolvent}} ~

\begin{steps}

\step{Proof of (i) -- Well-definedness and continuity.}
We show that the series in the definition of $\mathscr{R}[V]$ converges in $L^p({\R^N},L^p(\T))$ and that
$\mathscr{R}$ preserves the time-symmetry, i.e., $\mathscr{R}(X^{p'}_s)\subset X^p_s$.
To prove the first point we use the estimate
\begin{align} \label{eq:Bound:RQ}
  \begin{aligned}
  \|\mathcal{R}^Q_k w\|_p
  &= \|Q^{1/p}\mathcal{R}_k (Q^{1/p}w)\|_p \\
  &\leq \|Q^{1/p}\|_{\frac{pq}{q-p}}  \|\mathcal{R}_k (Q^{1/p}w)\|_q \\
  &\stackrel{(A1)}\leq \|Q\|_{ \frac{q}{q-p} }^{1/p} C(k^2+1)^{-\alpha/2} \|Q^{1/p}w\|_{q'}
  \\
  &\leq \|Q\|_{ \frac{q}{q-p} }^{1/p}  \|Q^{1/p}\|_{ \frac{p'q'}{p'-q'}}
  C(k^2+1)^{-\alpha/2}  \|w\|_{p'} \\
  &= \|Q\|_{\frac{q}{q-p}}^{2/p}
  C(k^2+1)^{-\alpha/2}  \|w\|_{p'} \\
  &\leq C_1(k^2+1)^{-\alpha/2} \|w\|_{p'}
  \qquad\text{where }C_1:= C\|Q\|_{\frac{q}{q-p}}^{2/p}.
\end{aligned}
\end{align}
Next we prove an estimate for sums over finitely many modes $k\in\I_s$ that will imply the
well-definedness of $\mathscr R$ after some Cauchy sequence argument. So let $\J_s\subset \I_s$ be any finite
subset. Then:
\begin{align*}
	 \norm{\sum_{k\in\J_s}
	\e{\i k\, \cdot \, } \: \mathcal{R}^Q_k\left[ v_k \right]}_{L^p(\R^N,L^p(\T))}
	& =
	\norm{ \norm{
	\sum_{k\in\J_s}
	\e{\i k\, \cdot \, } \:\mathcal{R}^Q_k\left[ v_k \right]
	}_{L^p(\T)} }_p
	\\
	&  \stackrel{\eqref{eq:HYI}}\leq
	\norm{
	\left(
	\sum_{k\in\J_s}
	| \mathcal{R}^Q_k\left[ v_k \right] |^{p'}
	\right)^{1/p'}
	}_p  \\
	&=
	\norm{
	\sum_{k\in\J_s}
	| \mathcal{R}^Q_k\left[ v_k \right] |^{p'}	}_{p-1}^{1/p'}	  \\
	&\stackrel{p>2}\leq
	\left(\sum_{k\in\J_s}
	\norm{| \mathcal{R}^Q_k\left[ v_k \right] |^{p'}
	}_{p-1} \right)^{1/p'}	  \\
	&= \left(\sum_{k\in\J_s}
	\norm{
	\mathcal{R}^Q_k\left[ v_k \right]}_p^{p'}
	 \right)^{1/p'}
	 \\
	 &\stackrel{\eqref{eq:Bound:RQ}}\leq
	C_1  \:
	 \left( \sum_{k\in\J_s}
	(k^2 + 1)^{- \frac{\alpha p'}{2}}
	\norm{ v_k 	}^{p'}_{p'}
	 \right)^{1/p'}
		  \\
	 &= C_1  \:
	 \left( \int_{\R^N} \sum_{k\in\J_s}
	(k^2 + 1)^{- \frac{\alpha p'}{2}}
	|v_k(x)|^{p'} \dx \right)^{1/p'} \\
	&\stackrel{p>2}\leq C_1  \:
	 \left( \int_{\R^N}  \left(
	 \sum_{k\in\J_s} (k^2 + 1)^{- \frac{\alpha p'}{2} \cdot \frac{p-1}{p-2}}
	 \right)^{\frac{p-2}{p-1}} \left(\sum_{k\in\J_s}
	|v_k(x)|^{p}\right)^{\frac{p'}{p}} \dx \right)^{1/p'} \\
	&\stackrel{\eqref{eq:HYII}}\leq
	 C_1 \: \left(
	 \sum_{k\in\J_s} (k^2 + 1)^{- \frac{\alpha p}{2(p-2)}}
	 \right)^{\frac{p-2}{p}} \:
	 \norm{  \|V(\cdot,x)\|_{L^{p'}(\T)}  }_{p'}  \\
	&=	 C_1 \:\left(
	 \sum_{k\in\J_s}  (k^2 + 1)^{- \frac{\alpha p}{2(p-2)}}
	 \right)^{\frac{p-2}{p}}\: \norm{V}_{L^{p'}(\R^N,L^{p'}(\T))}.
\end{align*}
So we have proved 
\begin{equation}\label{eq_cauchyestimate}
	 \norm{\sum_{k\in\J_s}}
	\e{\i k\, \cdot \, } \: \mathcal{R}^Q_k\left[ v_k \right]_{L^p(\R^N,L^p(\T))}
	\leq
	C_1 \:
	 \left(
	 \sum_{k\in\J_s} (k^2 + 1)^{- \frac{\alpha p}{2(p-2)}}
	 \right)^{\frac{p-2}{p}}\norm{V}_{L^{p'}(\R^N,L^{p'}(\T))}.
\end{equation}
In view of \eqref{eq_cauchyestimate} and $\frac{\alpha p}{p-2} > 1$, which holds by assumption (A1),
a Cauchy sequence argument proves that the corresponding infinite sum converges. So $\mathscr{R}
[V]\in L^p({\R^N},L^p(\T))$  is well-defined  and 
\begin{equation*}
  \norm{\mathscr{R}[V]}_{L^p(\R^N,L^p(\T))} \leq C_1 \:
  \left(\sum_{k\in\I_s} (k^2 + 1)^{- \frac{\alpha p}{2(p-2)}} \right)^{\frac{p-2}{p}}
  \norm{V}_{L^{p'}(\R^N,L^{p'}(\T))}.
\end{equation*}
Notice that $\mathscr{R}$ maps real-valued functions to real-valued functions because so do the operators
$\mathcal{R}^Q_k$. So it remains to prove the time-symmetry preserving property of $\mathscr R$, i.e.,
$\mathscr{R}(X^{p'}_s)\subset X^p_s$. Recall that elements $V\in X^{p'}_s$ are real-valued by assumption and
thus satisfy $v_k=\overline{v_{-k}}$ for all $k\in \Z$ and all $s=1,\ldots,5$.
\begin{itemize}
  \item For $s=1$ there is nothing to prove.
  \item For $s=2$ any $V\in X^{p'}_s$ is even in time. Equivalently, all Fourier modes $v_k=v_{-k}$ are
  real-valued.
  This is true also for $\mathscr R V$ because $\mathcal R_k=\mathcal R_{-k}$ by (A1)  implies
  \begin{align*}
    [\mathscr R V]_k &= \mathcal R_k^Q[v_k] = \mathcal R_{-k}^Q[v_{-k}] = [\mathscr R V]_{-k},\\
    [\mathscr R V]_k
    &= \mathcal R_k^Q[v_k]
    = \mathcal R_k^Q[\overline{v_k}]
    = \overline{\mathcal R_k^Q[ v_k]}
    = \overline{ [\mathscr R V]_k }.
  \end{align*}
  \item For $s=3$ any $V\in X^{p'}_s$ is odd in time, i.e., the zero mode $v_0=0$ vanishes and
  the other Fourier modes $v_k=-v_{-k}$ are purely imaginary. Again,
  this is true also for $\mathscr R V$ because the zero mode does not occur in $\I_s$ and
  \begin{align*}
    [\mathscr R V]_k &= \mathcal R_k^Q[v_k] = \mathcal R_{-k}^Q[-v_{-k}] = -[\mathscr R V]_{-k},\\
    [\mathscr R V]_k
    &= \mathcal R_k^Q[v_k]
    = \mathcal R_k^Q[-\overline{v_k}]
    = -\overline{ [\mathscr R V]_k }.
  \end{align*}
  \item For $s=4$ any $V\in X^{p'}_s$ is $\pi$-periodic, i.e., the modes with odd $k$ vanish. Since $\I_4=2\Z$
  this is true as well for $\mathscr R V$.
  \item For $s=5$ any $V\in X^{p'}_s$ is $\pi$-antiperiodic, i.e., the modes with even $k$ vanish. Since
  $\I_5=2\Z+1$ this is true as well for $\mathscr R V$.
\end{itemize}

\step{Proof of (i) -- Symmetry and compactness.}

The symmetry of $\mathscr{R}$, which means
\begin{align*}
	\int_{\T \times {\R^N}} \mathscr{R}[V](t,x) W(t,x) \: \dtx
	= \int_{\T \times {\R^N}} V(t,x) \mathscr{R}[W](t,x) \: \dtx
\end{align*}
for all $V, W \in X^{p'}_s$, follows from the the continuity of $\mathscr R:X_s^{p'}\to X_s^p$ and
the symmetry of $\mathcal{R}_k^Q$.
We now turn to the proof of compactness. Assume that $(V^{(n)})_n$ is a bounded
sequence in $X^{p'}_s$ with $\norm{V^{(n)}}_{p'} \leq C_V$ for all $n \in \N$. We aim to show that a
subsequence of $(\mathscr{R}[V^{(n)}])_n$ converges in $X^{p'}_s$. Here, for almost all $t \in \T$ and $n \in
\N$,
\begin{align*}
	\mathscr{R} [V^{(n)}](t, \, \cdot \,)
	= \sum_{k \in \I_s} \e{\i kt} \: \mathcal{R}^Q_k [v_k^{(n)}]
	\qquad
	\text{in } L^p({\R^N}).
\end{align*}
From H\"older's inequality we infer
\begin{align*}
	\norm{v_k^{(n)}}_{p'}
	= \norm{ \frac{1}{2\pi} \: \int_{\T} \e{\i kt} \: V^{(n)}(t, \, \cdot \, ) \dt }_{p'}
	\leq (2\pi)^{-\frac{1}{p'}} \: \norm{V^{(n)}}_{L^{p'}(\R^N,L^{p'}(\T))}  \leq C_V.
\end{align*}
So all sequences $(v_k^{(n)})_n$ are bounded in $L^{p'}({\R^N})$. The compactness of $\mathcal R_k^Q$ from
(A2) combined with a standard diagonal sequence argument provides $y_k \in L^p({\R^N}), k\in\I_s,$ and
a subsequence $(v_k^{(n_j)})_j$ with
\begin{align}\label{eq_diagonalsequence}
	\forall \: k \in \I_s
	\qquad
	\mathcal{R}^Q_k [v_k^{(n_j)}] \to y_k
	\qquad
	\text{ in } L^p({\R^N}) \text{ as } j \to \infty .
\end{align}
We claim that
this implies
\begin{equation}\label{eq:PS}
  \mathscr{R}  [V^{(n_j)}]\to  \sum_{k \in \I_s} \e{\i k \,\cdot \,} y_k \qquad\text{in
  }X^p_s\;\text{as }j\to\infty.
\end{equation}

\medskip

Before verifying~\eqref{eq:PS} we check that the term on the right indeed belongs to $L^p(\R^N,L^p(\T))$.
Indeed, for all finite $\J_s\subset\I_s$ we have
\begin{align*}
	\norm{ \sum_{k\in\J_s} \e{\i k\cdot} \, y_k }_{L^p(\R^N,L^p(\T))}
	& =
	\lim_{j \to \infty} \norm{ \sum_{k\in\J_s} \e{\i k\cdot} \: \mathcal{R}^Q_k [v_k^{(n_j)}]
	}_{L^p(\R^N,L^p(\T))}
	\\
	&\stackrel{\eqref{eq_cauchyestimate}}\leq
	C_1 \left( \sum_{k\in\J_s} (k^2 + 1)^{-\frac{\alpha p}{2(p-2)}} \right)^\frac{p-2}{p}  \limsup_{j \to \infty}
	\norm{V^{(n_j)}}_{L^{p'}(\R^N,L^{p'}(\T))}
	\\
	& \leq
	C_1 C_V \left( \sum_{k\in\J_s} (k^2 + 1)^{-\frac{\alpha p}{2(p-2)}} \right)^\frac{p-2}{p}
	<\infty.
\end{align*}
As above, this and $\alpha>1-\frac{2}{p}$ implies $\sum_{k \in \I_s} \e{\i k \,\cdot \,}  y_k\in
L^p(\R^N,L^p(\T))$.
Next we verify~\eqref{eq:PS}. So let $\eps>0$. From the previous statement we obtain some finite subset
$\J_s\subset\I_s$ such that for all $j\in\N$
\begin{align*}
	\norm{ \sum_{k\in\I_s\sm \J_s} \e{\i k\cdot}  y_k }_{L^p(\R^N,L^p(\T))}
	&< \frac{\eps}{4},
	\\
	\norm{ \sum_{k\in\I_s\sm\J_s} \e{\i k\cdot}
	\mathcal{R}^Q_k [v_k^{(n_j)}] }_{L^p(\R^N,L^p(\T))}
	&\overset{\eqref{eq_cauchyestimate}}{\leq} C_R \: \left( \sum_{k\in\I_s\sm\J_s}
	(k^2 + 1)^{- \frac{\alpha p}{2(p-2)}}
	\right)^{\frac{p-2}{p}}
	 \norm{V^{(n_j)}}_{L^{p'}(\R^N,L^{p'}(\T))} 	\\
	&\: \leq C_R C_V \: \left( \sum_{k\in\I_s\sm\J_s}
	(k^2 + 1)^{- \frac{\alpha p}{2(p-2)}}
	\right)^{\frac{p-2}{p}} \\
	 &< \frac{\eps}{4}.
\end{align*}
So~\eqref{eq_diagonalsequence} and again the Hausdorff-Young
inequality yield some $j_0 = j_0(\eps) \in \N$ such that for all $j\geq j_0$ the following holds.
\begin{align*}
	\norm{ \sum_{k\in\J_s} \e{\i k\cdot} \, \mathcal{R}^Q_k [v_k^{(n_j)}]
	- \sum_{k\in\J_s} \e{\i k\cdot} \, y_k}_{L^p(\R^N,L^p(\T))}
	\leq \left(
	\sum_{k\in\J_s}
	\norm{ \mathcal{R}^Q_{k} [v_k^{(n_j)}] - y_k}_{p}^{p'}
	\right)^\frac{1}{p'}
	\!\!\! < \frac{\varepsilon}{2}.
\end{align*}
Combining all these estimates, we infer for $j \geq j_0$
\begin{align*}
	  \norm{ \sum_{k \in \I_s} \e{\i k\cdot} \,  \mathcal{R}^Q_{k} [v_k^{(n_j)}]
	- \sum_{k \in \I_s} \e{\i k\cdot} \,   y_k}_{L^p(\R^N,L^p(\T))}
	< \varepsilon,
\end{align*}
which finishes the proof of~\eqref{eq:PS} because $X^p_s$ is closed in $L^p(\R^N,L^p(\T))$.

\step{Proof of (ii).}

 We essentially repeat the estimate from the first step where the exponent $q$ replaces
 $p$ in the spatial Lebesgue norm in order to take the missing factor $Q^{1/p}$ into account.
 Given that $q\geq p'$ and that our summation is over finitely many indices $k\in\J_s$  only, we obtain
\begin{align*}
   \norm{
	\norm{  \sum_{k\in\J_s} \e{\i k \cdot} \mathcal{R}_{k}[Q^{1/p} v_k]}_{L^p(\T)}}_q
	&\stackrel{ q\geq p'}\leq \left(\sum_{k\in\J_s}
	\norm{  \mathcal{R}_{k}[Q^{1/p} v_k]}_q^{p'}
	 \right)^{1/p'}
	 \\
	 &\leq	C     \:
	 \left( \sum_{k\in\J_s}
	(k^2 + 1)^{- \frac{\alpha p'}{2}}
	\norm{Q^{1/p} v_k 	}^{p'}_{q'}
	 \right)^{1/p'}
		  \\
   &\leq	C     \:
	 \left( \sum_{k\in\J_s}
	(k^2 + 1)^{- \frac{\alpha p'}{2}}
	\|Q^{1/p}\|_{\frac{p'q'}{p'-q'}}^{p'} \norm{ v_k 	}^{p'}_{p'}
	 \right)^{1/p'}
		  \\
	&\leq	C\|Q\|_{\frac{q}{q-p}}^{1/p}     \:
	 \left( \sum_{k\in\J_s}
	(k^2 + 1)^{- \frac{\alpha p'}{2}} \norm{ v_k 	}^{p'}_{p'}
	 \right)^{1/p'}
		  \\
	&\stackrel{\text{step 1}}\leq C\|Q\|_{\frac{q}{q-p}}^{1/p}  \:\left(
	 \sum_{k\in\J_s} (k^2 + 1)^{- \frac{\alpha p}{2(p-2)}}
	 \right)^{\frac{p-2}{p}}\: \|V\|_{L^{p'}(\R^N,L^{p'}(\T))}.
\end{align*}
Since the sum on the right is bounded independently of   $\J_s\subset\I_s$, we get the result.

\end{steps}

\hfill $\square$

\medskip

\subsection*{Proof of Proposition~\ref{prop_wavefunctional}} ~

We prove that the functional
\begin{align*}
	J: X_s^{p'}  \to \R,
	\quad
	J(V) := \frac{1}{p'}\int_{\T \times {\R^N}} |V|^{p'} \dtx - \frac{1}{2} \int_{\T \times {\R^N}} V
	\mathscr{R}[V] \dtx
\end{align*}
satisfies the assumptions of the Symmetric Mountain Pass Theorem. It is straightforward to deduce from
Proposition~\ref{prop_waveresolvent} and $X_s^{p'}\subset L^{p'}(\R^N,L^{p'}(\T))$ 
that $J$ is well-defined, even and of class $C^1$.

\medskip

\textbf{(i)}
Assuming $\norm{V}_{L^{p'}(\R^N,L^{p'}(\T))} = r$, we estimate using $C_R := \norm{\mathscr{R}}_{p' \to p} < \infty$ and get
\begin{align*}
	J(V)
	&= \frac{1}{p'} \int_{\T \times {\R^N}} |V|^{p'} \dtx - \frac{1}{2} \int_{\T \times {\R^N}} V \mathscr{R}[V]
	\dtx
	\\
	&\geq \frac{1}{p'} \norm{V}_{L^{p'}(\R^N,L^{p'}(\T))}^{p'} - \frac{C_R}{2} \norm{V}^2_{L^{p'}(\R^N,L^{p'}(\T))}  \\
	&= r^{p'} \left( \frac{1}{p'} 	- \frac{C_R}{2}  r^{2-p'} \right).
\end{align*}
Hence, the claim~(i) holds for $r = (C_R p')^{-1/(2 - p')}$ and $\delta =  r^{p'} / 2p'>0$.

\medskip

\textbf{(ii)}
According to (A3) we find $\omega_k \in L^{p'}({\R^N})$ such that  w.l.o.g.
\begin{align*}
	\int_{{\R^N}} \omega_k \mathcal{R}^Q_k[\omega_k] \dx = \frac{2}{\pi}
	\quad \text{for all } k \in \I_s.
\end{align*}
 With that, we choose  for positive  $k\in\I_s$
\begin{align*}
	V_k(t, x)
	:= w_k(x) T_k(t)
	:= \begin{cases}
	 w_k(x) \cos(kt) &\text{if }s\in\{1,2,4\}, \\
	 w_k(x) \sin(kt) &\text{if }s\in\{3,5\}.
	\end{cases}
\end{align*}
This choice guarantees $V_k\in X^{p'}_s$ for all $k\in\I_s$. Moreover,  for positive $k,k'\in\I_s$ 
\begin{align*}
	&\int_{\T \times {\R^N}} V_{k'} \: \mathscr{R} [V_k] \dtx
	=
	\int_{\T}  T_{k'}(t) T_k(t)	\dt
	\cdot
	\int_{{\R^N}} \omega_{k'} \, \mathcal{R}^Q_k[\omega_k] \dx
	\begin{cases}
	= 0 & \text{ if } k \neq k',
	\\
	= 2 & \text{ if } k = k'.
	\end{cases}
\end{align*}
So the $V_k$ are linearly independent. Indeed, $\sum_{k\in\J_s} c_k V_k=0$ for some
$c_k\in\R$ and finite subset $\J_s\subset\I_s$ implies $c_{k'}=0$ for all $k'\in\J_s$ because of
$$
  0
  = \int_{\T \times {\R^N}} V_{k'} \: \mathscr{R} \left[\sum_{k\in\J_s} c_k V_k\right] \dtx
  = \sum_{k\in\J_s} c_k  \int_{\T \times {\R^N}} V_{k'} \: \mathscr{R} [V_k] \dtx
  =  2 c_{k'}.
$$
 Choosing nested subsets $\I_s^j\subset \I_s$ with $j$ positive  elements
and $\mathfrak{W}_j:=\spa\{ V_k:k\in\I_s^j\}$ we thus get
$\text{dim } \mathfrak{W}_j = j$. For any fixed $j \in \N$, equivalence of norms   provides a constant $c_j > 1$ with
\begin{align*}
	\frac{1}{c_j} \left( \sum_{k\in \I_s^j}  \beta_k^2 \right)^{1/2} \leq
	\norm{\sum_{k\in \I_s^j} \beta_k V_k}_{L^{p'}(\R^N,L^{p'}(\T))} \leq
	c_j \left( \sum_{k\in \I_s^j}  \beta_k^2 \right)^{1/2}
	\:\: \text{whenever } \beta_k\in \R, k\in \I_s^j.
\end{align*}
For $R > r$ and some arbitrary element $V = \sum_{k\in \I_s^j} \beta_k V_k \in \mathfrak{W}_j$ with
$\norm{V}_{L^{p'}(\R^N,L^{p'}(\T))} = R$, we obtain the estimate
\begin{align*}
 	J(V)	&= \frac{1}{p'}
	\int_{\T \times {\R^N}} |V|^{p'} \dtx
	 - \frac{1}{2} \int_{\T \times {\R^N}} V \: \mathscr{R}[V] \dtx
	 \\
	 &=
	 \frac{1}{p'} \int_{\T \times {\R^N}} |V|^{p'} \dtx  - \frac{1}{2} \sum_{k,k'\in \I_s^j} \beta_k
	 \beta_{k'} \int_{\T \times {\R^N}} V_{k'} \: \mathscr{R}[V_k] \dtx
	  \\
	 &=   \frac{1}{p'} \cdot  R^{p'}  - \sum_{k\in \I_s^j}  \beta_k^2 \\
 	&\leq
 	 \frac{1}{p'} \cdot R^{p'}  - \frac{1}{c_j^2} \cdot R^2.
\end{align*}
Since $p' < 2$, we thus conclude for $R_j := \max \left\{ r, \left(c_j^2/p'\right)^{1/(2-p')} \right\}$ that
$J(V) < 0$ whenever $V \in \mathfrak{W}_j$ with $\norm{V}_{L^{p'}(\R^N,L^{p'}(\T))} > R_j$.

\medskip

\textbf{(iii)}
Take any Palais-Smale sequence $(V_n)_n$ for $J$, that is, $V_n \in X^{p'}_s$ with
\begin{align*}
	J'(V_n) \to 0 \quad \text{in } \left(X^{p'}_s\right)' = X^p_s,
	\qquad
	J(V_n) \to c
\end{align*}
where $c > 0$ denotes the Mountain Pass level. We claim that the sequence $(V_n)_n$ is bounded. Indeed,
assuming otherwise, the identity
\begin{align*}
	J'(V_n)[V_n] - 2 J(V_n)
	=  \left(1-\frac{2}{p'}\right) \int_{\T \times {\R^N}} |V_n|^{p'} \dtx
\end{align*}
leads in the limit $n \to \infty$ to the contradictory statement
\begin{align*}
	0
	= \limsup_{n\to\infty} \frac{J'(V_n)[V_n]-2J(V_n)}{\norm{V_n}_{L^{p'}(\R^N,L^{p'}(\T))}}
	= \limsup_{n\to\infty}  \left(1-\frac{2}{p'}\right)  \norm{V_n}_{L^{p'}(\R^N,L^{p'}(\T))}^{p'-1}
	= - \infty.
\end{align*}
Hence, we may assume w.l.o.g. that $V_n \rightharpoonup V$ weakly in $X^{p'}_s$ for some $V\in X^{p'}_s$.
Due to the compactness of $\mathscr{R}$ (see Proposition~\ref{prop_waveresolvent}),
this implies $\mathscr{R} [V_n] \to \mathscr{R}[V]$ strongly in $L^p_s(\T \times {\R^N})$. Hence, we obtain
\begin{align*}
	\int_{\T \times {\R^N}} V_n \mathscr{R}[V_n] \dtx
	\to
	\int_{\T \times {\R^N}} V \mathscr{R}[V] \dtx .
\end{align*}

As in the proof of~\cite[Lemma~5.2]{EvequozWeth} we conclude $V_n \to V$ in the strong sense. Indeed, weak
convergence implies
$$
  \|V\|_{L^{p'}(\R^N,L^{p'}(\T))}\leq \liminf_{n\to\infty} \|V_n\|_{L^{p'}(\R^N,L^{p'}(\T))}
$$
and the convexity of $t\mapsto |t|^{p'}$ yields, as $n\to\infty$,
\begin{align*}
  &\frac{1}{p'}\|V\|_{L^{p'}(\R^N,L^{p'}(\T))} - \frac{1}{p'} \|V_n\|_{L^{p'}(\R^N,L^{p'}(\T))} \\
  &\geq  \int_{\T \times \R^N} |V_n|^{p'-2}V_n(V-V_n)\dtx \\
  &=  J'(V_n)[V-V_n] + \int_{\R^N} V_n \mathscr R[V-V_n] \dtx = o(1),
\end{align*}
whence
$$
  \limsup_{n\to\infty} \|V_n\|_{L^{p'}(\R^N,L^{p'}(\T))} \leq \|V\|_{L^{p'}(\R^N,L^{p'}(\T))}.
$$
We conclude that the sequence of norms converges to the norm of the weak limit $V$. By uniform convexity of
$L^{p'}(\T \times \R^N)$, this implies $V_n\to V$ as $n\to\infty$ and the statement is proved.
\hfill $\square$

\medskip

\subsection*{Proof of Proposition~\ref{prop_wavesolution}} ~

We consider a nontrivial critical point $V\in L^{p'}_s(\T \times {\R^N}) $ of the functional $J$. Since
$\mathscr R$ is symmetric by Proposition~\ref{prop_waveresolvent}~(i), the Euler-Lagrange equation reads
\begin{align}\label{eq_criticalpoint}
	|V|^{p'-2}V = \mathscr{R} [V] \qquad \text{in } X^p_s = L^p({\R^N},L^p_s(\T)).
\end{align}
From  Proposition~\ref{prop_waveresolvent}~(ii) and~\eqref{eq_criticalpoint} we infer $U :=
(Q^{-1/p}\mathscr R)[V]\in L^q({\R^N},L^p_s(\T))$.  We will use
$Q^{1/p} U = Q^{1/p}\cdot  (Q^{-1/p}\mathscr R)[V]  = \mathscr R[V] = |V|^{p'-2}V$ and thus
$$
  Q|U|^{p-2}U
  = Q^{1/p}\cdot |Q^{1/p} U|^{p-2}Q^{1/p} U
  = Q^{1/p} V.
$$
 Using these facts, we have  to  verify
$$
  \int_{\T \times {\R^N}} Q|U|^{p-2}U \:\Phi \dtx
	= \int_{\T \times {\R^N}} U \: (\partial_{tt} + \L )  \Phi \dtx
$$
for all $C_c^\infty({\R^N},C^\infty(\T))$. To this end we proceed step by step. 
\medskip

We first verify the above identity for real-valued test functions of the form
$$
  \Phi(t, x) :=  \sum_{k\in\J_s} \e{-\i k t}   \phi_k(x),\qquad
  \J_s\subset\I_s \text{ finite },
  \phi_k\in C_c^\infty({\R^N})\,(k\in\J_s) \text{ such that }\Phi\in X^{p'}_s.
$$
To see this we use the Euler-Lagrange equation and the definition of $U$ from above.
\begin{align*}
	&\int_{\T \times {\R^N}} Q(x)|U(t,x)|^{p-2}U(t,x) \Phi(t,x)  \dtx
	\\
	& \quad =
	\int_{\T \times {\R^N}} Q(x)^{1/p}V(t,x) \Phi(t,x)  \dtx
	\\
	&\quad =
	\sum_{k\in\J_s} \int_{{\R^N}} Q(x)^{1/p} \phi_k(x)
	\left[ \int_{\T}  \e{-\i k t} \, V(t, x) \dt \right]
	 \dx
	\\
	&\quad = 2\pi
	\sum_{k\in\J_s} \int_{{\R^N}}  Q(x)^{1/p}v_k(x)\: \phi_k(x) \dx\\
	&\quad \overset{(A1)}{=} 2\pi
	\sum_{k\in\J_s} \int_{{\R^N}} \mathcal{R}_k [Q^{1/p}  v_k] (x)\:
	 (\L - k^2) \phi_k(x)
	\dx\\
	&\quad =  2\pi
	 \int_{{\R^N}} \sum_{k\in\J_s}
	 \mathcal{R}[Q_k^{1/p} v_k] (x)	 \:
	(\L - k^2) \phi_k(x)
	\dx
	\\
	&\quad =
	\int_{{\R^N}}  \sum_{k\in\I_s}
	 \mathcal{R}[Q_k^{1/p} v_k] (x) \:
	\left((\L - k^2) \left[ \int_{\T} \e{\i k t} \Phi(t,\cdot) \dt 	\right]\right)(x)
	\dx
	\\
	&\quad =
	\int_{\R^N}  \sum_{k\in\I_s}
	 \mathcal{R}[Q_k^{1/p} v_k] (x)	 \:\:
	\left[  \int_{\T}  \e{\i k t} \L \left[  \Phi(t,\cdot) \right](x)
	+ \partial_{tt} \left[ \e{\i k t} \right] \Phi(t,x) \dt 	\right]	\dx.
	\intertext{We now integrate by parts. Since $\Phi (\, \cdot \, , x)$ is periodic, the
	boundary terms in
	$$
	\int_{0}^{2\pi} \Phi \: \partial_{tt} \left[ \e{\i k t} \right] \dt
	= \left[ \i k \Phi \cdot \e{\i k t} + (\partial_t \Phi) \cdot \e{\i k t} \right]_{0}^{2\pi}
	+ \int_{0}^{2\pi}  \e{\i k t} \partial_{tt} \Phi \dt
	$$
	vanish for a.e. $x \in {\R^N}$. So we get}
	&\int_{\T \times {\R^N}} Q(t,x)|U(t,x)|^{p-2}U(t,x) \: \Phi(t,x)  \dtx \\
    &\quad =
	\int_{{\R^N}}  \sum_{k\in\I_s}
	 \mathcal{R}[Q_k^{1/p} v_k] (x) \:
	\left[  \int_{\T} \e{\i k t} \left(\partial_{tt} +  \L \right) \Phi  \dt 	\right]
	\dx	\\
		&\quad =
	\int_{{\R^N}}  \sum_{k\in\I_s}    \int_{\T} \e{\i k t}
	 \mathcal{R}[Q_k^{1/p} v_k] (x) \:
	\left(\partial_{tt} + \L \right) \Phi \dt \dx \\
	&\quad =
	\int_{\T \times {\R^N}}
	   \sum_{k\in\I_s}
	 \e{\i k t}
	 \mathcal{R}[Q_k^{1/p} v_k] (x)  \:
	\left(\partial_{tt} + \L \right) \Phi
	 \dtx	\\
	&\quad =
	\int_{\T \times {\R^N}}
	(Q^{-1/p}\mathscr{R})[V] \:
	(\partial_{tt}+ \L)  \Phi
	\dtx
	\\
	&\quad =
	\int_{\T \times {\R^N}} U \:
	(\partial_{tt} + \L )  \Phi
	\dtx.
\end{align*}

\medskip

Next we  extend this identity to more general test functions. We claim that the above identity even holds
for
\begin{align*}
	\Phi(t,x):= \sum_{k\in\J} \e{-\i k t}  \, \phi_k(x)
	 \qquad
	 \J\subset\Z \text{ finite},\; \phi_k=\overline{\phi_{-k}}\in C_c^\infty({\R^N}) \, (k\in\J).
\end{align*}
Indeed, given that $\J$ is finite, we have $\Phi\in L^{p'}(\R^N,L^{p'}(\T))$. Moreover, since
the functions $U(\cdot,x)$ and $Q|U(\cdot,x)|^{p-2}U(\cdot,x)$ have the symmetry  indexed by $s$
for almost all $x\in{\R^N}$, the time-symmetry requirement for the  test function
 is not a true restriction.  In fact, it imposes extra assumptions on the $\phi_k$ only for $s\in\{2,3\}$, see the
explanations near~\eqref{eq:defn_modes}, but those restrictions are not necessary since integration of $\sin$
against $\cos$-functions over the interval $[0,2\pi]$ gives 0. Moreover, $U(\cdot,x)$ and
$Q|U(\cdot,x)|^{p-2}U(\cdot,x)$ are $L^2(\T)$-orthogonal to the modes $\e{-\i k t}$ with $k\in\Z\sm \I_s$.
So the nonlinear wave-type equation actually holds in the distributional sense for test functions $\Phi$ as
above. 

\medskip

It remains to pass to the limit $\J\nearrow \Z$ because of
$$
  \Phi(t,x)= \sum_{k\in\Z} \e{-\i k t}  \, \phi_k(x),\quad
  \phi_k(x):= \frac{1}{2\pi}\int_\T \e{\i kt}\Phi(x,t)\dt
  \quad\text{where } \Phi\in C_c^\infty(\R^N,C^\infty(\T)).
$$
To see that this passage is possible, we choose a compact set $K\subseteq {\R^N}$ such that $\Phi(\cdot,t)$s
and hence all the $\phi_k$ have support contained in $K$. Then
\begin{align} \label{eq:DistributionalSolution}
  \begin{aligned}
	&\sum_{k\in\Z} \left| \int_{\T \times K} U \:
	(\partial_{tt} + \L )  \left(\e{-\i k t}  \, \phi_k(x)\right)\dtx\right| \\
	&\leq \sum_{k\in\Z}  \int_{\T \times K} |U| |(\L-k^2)\phi_k|\dtx \\
	&\leq \sum_{k\in\Z}  \|U\|_{L^q(K,L^q(\T))} \| (\L-k^2)\phi_k\|_{L^{q'}(K,L^{q'}(\T))} \\
	&\leq \sum_{k\in\Z}  \|U\|_{L^q(\R^N,L^q(\T))} (2\pi)^{1/q'} (\|\L\phi_k\|_{L^{q'}(K)}
	+ k^2 \|\phi_k\|_{L^{q'}(K)})\\
	&\stackrel{(A1)}\leq  \|U\|_{L^q(\R^N,L^q(\T))} (2\pi)^{1/q'} \sum_{k\in\Z}  (\|\phi_k\|_{W^{m,\infty}(K)}
	+ k^2|K|^{\frac{1}{q'}} \|\phi_k\|_{L^\infty(K)})
\end{aligned}
\end{align}
Since $\Phi$ is smooth and periodic with respect to $t$, we get from integration by parts
\begin{align*}
	\|\phi_k\|_{W^{m,\infty}(K)}
	&\leq  2(k^2+1)^{-1}  (\|\Phi\|_{W^{m,\infty}(\T\times
	K)} +  \|\partial_{tt}\Phi\|_{W^{m,\infty}(\T\times K)}), \\
	\|\phi_k\|_{L^{\infty}(K)}
	&\leq  2(k^4+1)^{-1}  (\|\Phi\|_{W^{m,\infty}(\T\times
	K)} + \|\partial_{tttt}\Phi\|_{W^{m,\infty}(\T\times K)})
\end{align*}
for all $k\in\Z$. As a consequence, this above series converges.
This shows that we can pass to the limit $\J\nearrow\Z$ on the right hand side of
the above distributional formulation of the wave equation. A similar estimate for the left hand side
gives that the nonlinear wave equation is satisfied in the sense of~\eqref{eq_weaksolution}, which finishes the proof.

\hfill $\square$

\section*{Acknowledgements}

The authors thank Wolfgang Reichel (KIT) for several discussions leading to an improvement of the manuscript.
Funded by the Deutsche Forschungsgemeinschaft (DFG, German Research Foundation) -- Project-ID 258734477 --
SFB 1173.

\bibliographystyle{abbrv}
\bibliography{Literatur}

\begin{thebibliography}{10}

\bibitem{Abl_Method}
M.~J. Ablowitz, D.~J. Kaup, A.~C. Newell, and H.~Segur.
\newblock Method for solving the sine-{G}ordon equation.
\newblock {\em Phys. Rev. Lett.}, 30:1262--1264, 1973.

\bibitem{Ale_NLStab}
M.~A. Alejo.
\newblock Nonlinear stability of {G}ardner breathers.
\newblock {\em J. Differential Equations}, 264(2):1192--1230, 2018.

\bibitem{AleFanMun_Akhmediev}
M.~A. Alejo, L.~Fanelli, and C.~Mu\~{n}oz.
\newblock The {A}khmediev breather is unstable.
\newblock {\em S\~{a}o Paulo J. Math. Sci.}, 13(2):391--401, 2019.

\bibitem{AleMun_NLStability}
M.~A. Alejo and C.~Mu\~{n}oz.
\newblock Nonlinear stability of {MK}d{V} breathers.
\newblock {\em Comm. Math. Phys.}, 324(1):233--262, 2013.

\bibitem{AleMunPal_VariationalStructure}
M.~A. Alejo, C.~Mu\~{n}oz, and J.~M. Palacios.
\newblock On the variational structure of breather solutions {I}:
  {S}ine-{G}ordon equation.
\newblock {\em J. Math. Anal. Appl.}, 453(2):1111--1138, 2017.

\bibitem{Schneider}
C.~Blank, M.~Chirilus-Bruckner, V.~Lescarret, and G.~Schneider.
\newblock Breather solutions in periodic media.
\newblock {\em Comm. Math. Phys.}, 302(3):815--841, 2011.

\bibitem{Brezis_vibes}
H.~Br\'{e}zis, J.-M. Coron, and L.~Nirenberg.
\newblock Free vibrations for a nonlinear wave equation and a theorem of {P}.
  {R}abinowitz.
\newblock {\em Communications on Pure and Applied Mathematics}, 33 \,(5):\,
  667--684, 1980.

\bibitem{CasMan}
J.-B. Casteras and R.~Mandel.
\newblock {On Helmholtz Equations and Counterexamples to Strichartz Estimates
  in Hyperbolic Space}.
\newblock {\em International Mathematics Research Notices}, 01 2020.

\bibitem{CosMan}
L.~Cossetti and R.~Mandel.
\newblock A limiting absorption principle for {H}elmholtz systems and
  time-harmonic isotropic {M}axwell equations, 2020.
\newblock arXiv:2009.05087.

\bibitem{Denzler_Nonpersistence}
J.~Denzler.
\newblock Nonpersistence of breather families for the perturbed sine {G}ordon
  equation.
\newblock {\em Comm. Math. Phys.}, 158(2):397--430, 1993.

\bibitem{Hitchhiker}
E.~Di~Nezza, G.~Palatucci, and E.~Valdinoci.
\newblock Hitchhiker's guide to the fractional {S}obolev spaces.
\newblock {\em Bull. Sci. Math.}, 136(5):521--573, 2012.

\bibitem{Eveq_Orlicz}
G.~Ev\'{e}quoz.
\newblock A dual approach in {O}rlicz spaces for the nonlinear {H}elmholtz
  equation.
\newblock {\em Z. Angew. Math. Phys.}, 66(6):2995--3015, 2015.

\bibitem{Evequoz_plane}
G.~Ev{\'e}quoz.
\newblock Existence and asymptotic behavior of standing waves of the nonlinear
  {H}elmholtz equation in the plane.
\newblock {\em Analysis}, 37 \,(2):\, 55--68, 2019.

\bibitem{EvequozWeth}
G.~Ev{\'e}quoz and T.~Weth.
\newblock Dual variational methods and nonvanishing for the nonlinear
  {H}elmholtz equation.
\newblock {\em Advances in Mathematics}, 280\,:\, 690--728, 2015.

\bibitem{Frank_EVBounds}
R.~L. Frank.
\newblock Eigenvalue bounds for {S}chr\"{o}dinger operators with complex
  potentials.
\newblock {\em Bull. Lond. Math. Soc.}, 43(4):745--750, 2011.

\bibitem{ghoussoub}
N.~Ghoussoub.
\newblock {\em Duality and perturbation methods in critical point theory}.
\newblock Cambridge tracts in mathematics. Cambridge Univ. Press, Cambridge,
  1993.

\bibitem{GolSch}
M.~Goldberg and W.~Schlag.
\newblock A limiting absorption principle for the three-dimensional
  {S}chr\"{o}dinger equation with {$L^p$} potentials.
\newblock {\em Int. Math. Res. Not.}, (75):4049--4071, 2004.

\bibitem{Gutierrez}
S.~Guti{\'e}rrez.
\newblock Non trivial ${L}^q$ solutions to the {G}inzburg-{L}andau equation.
\newblock {\em Mathematische Annalen}, 328\,(1):\, 1--25, 2004.

\bibitem{Hirsch}
A.~Hirsch and W.~Reichel.
\newblock Real-valued, time-periodic localized weak solutions for a semilinear
  wave equation with periodic potentials.
\newblock {\em Nonlinearity}, 32 \,(4):\, 1408--1439, 2019.

\bibitem{Sogge}
S.~Huang and C.~D. Sogge.
\newblock Concerning {$L^p$} resolvent estimates for simply connected manifolds
  of constant curvature.
\newblock {\em J. Funct. Anal.}, 267(12):4635--4666, 2014.

\bibitem{Huang_LAP}
S.~Huang, X.~Yao, and Q.~Zheng.
\newblock Remarks on {$L^p$}-limiting absorption principle of {S}chr\"{o}dinger
  operators and applications to spectral multiplier theorems.
\newblock {\em Forum Math.}, 30(1):43--55, 2018.

\bibitem{Kenig}
C.~E. Kenig, A.~Ruiz, and C.~D. Sogge.
\newblock {U}niform {S}obolev inequalities and unique continuation for second
  order constant coefficient differential operators.
\newblock {\em Duke Math. J.}, 55\,(2):\, 329--347, 1987.

\bibitem{KowMarMun_Nonexistence}
M.~Kowalczyk, Y.~Martel, and C.~Mu\~{n}oz.
\newblock Nonexistence of small, odd breathers for a class of nonlinear wave
  equations.
\newblock {\em Lett. Math. Phys.}, 107(5):921--931, 2017.

\bibitem{Kuch_Overview}
P.~Kuchment.
\newblock An overview of periodic elliptic operators.
\newblock {\em Bull. Amer. Math. Soc. (N.S.)}, 53(3):343--414, 2016.

\bibitem{Man_Uncountably}
R.~Mandel.
\newblock Uncountably many solutions for nonlinear {H}elmholtz and curl-curl
  equations.
\newblock {\em Adv. Nonlinear Stud.}, 19(3):569--593, 2019.

\bibitem{MaMoPe_Osc}
R.~Mandel, E.~Montefusco, and B.~Pellacci.
\newblock Oscillating solutions for nonlinear {H}elmholtz equations.
\newblock {\em Z. Angew. Math. Phys.}, 68(6):Paper No. 121, 19, 2017.

\bibitem{ManSch_Dual}
R.~Mandel and D.~Scheider.
\newblock Dual variational methods for a nonlinear {H}elmholtz system.
\newblock {\em NoDEA Nonlinear Differential Equations Appl.}, 25(2):Paper No.
  13, 26, 2018.

\bibitem{ManSchYes}
R.~Mandel, D.~Scheider, and T.~Yesil.
\newblock Dual variational methods for an indefinite nonlinear helmholtz
  equation, 2020.
\newblock arXiv:2011.07808.

\bibitem{MunPon_Breathers}
C.~Mu\~{n}oz and G.~Ponce.
\newblock Breathers and the dynamics of solutions in {K}d{V} type equations.
\newblock {\em Comm. Math. Phys.}, 367(2):581--598, 2019.

\bibitem{PlumReichel_Breather}
M.~Plum and W.~Reichel.
\newblock A breather construction for a semilinear curl-curl wave equation with
  radially symmetric coefficients.
\newblock {\em J. Elliptic Parabol. Equ.}, 2(1-2):371--387, 2016.

\bibitem{Rab_vibes}
P.~H. Rabinowitz.
\newblock Free vibrations for a semilinear wave equation.
\newblock {\em Communications on Pure and Applied Mathematics}, 31 \,(1):\,
  31--68, 1978.

\bibitem{scheider2020breather}
D.~Scheider.
\newblock Breather solutions of the cubic {K}lein-{G}ordon equation.
\newblock {\em Nonlinearity}, 33(12):7140--7166, 2020.

\end{thebibliography}
  
\end{document}